\theoremstyle{plain}
\newtheorem{theorem}{Theorem}[section]
\newtheorem{lemma}[theorem]{Lemma}
\newtheorem{proposition}[theorem]{Proposition}
\newtheorem{corollary}[theorem]{Corollary}
\theoremstyle{definition}
\newtheorem{definition}[theorem]{Definition}
\theoremstyle{remark}
\newtheorem*{example}{Example}
\newcommand{\gen}[1]{\langle#1\rangle}
\newcommand{\clos}{\mathop{\mathrm{cl}}}
\newcommand{\limfty}[1]{\lim_{#1\rightarrow\infty}}
\newcommand{\ct}{^{\mathrm{T}}}
\newcommand{\sort}{^\downarrow}
\newcommand{\nn}{\mathbb{N}}
\begin{document}
	
	\title{Universality for Doubly Stochastic Matrices}
	\author{Wei Zhan\thanks{Department of Computer Science, Princeton University. Research supported by the Simons Collaboration on Algorithms and Geometry, by a Simons Investigator Award and by the National Science Foundation grant No. CCF-1714779.}}
	
	\date{}
	
	\maketitle
	
	\begin{abstract}
		We show that the set of entries generated by any finite set of doubly stochastic matrices is nowhere dense, in contrast to the cases of stochastic matrices or unitary matrices. In other words, there is no finite universal set of doubly stochastic matrices, even with the weakest notion of universality.
		
		Our proof is based on a theorem for topological semigroups with the convergent property. A topological semigroup is convergent if every infinite product converges. We show that in a compact and convergent semigroup, under some restrictions, the closure of every finitely generated subsemigroup can be instead generated directly by the generating set and the limits of infinite products.                         
	\end{abstract}

	\section{Introduction}
	
	When studying circuit models of computation, a small universal set of gates is often desired, both for the theoretical convenience of capturing their computational power, and for the practical feasibility of physically synthesizing the circuits. For instance, the set $\{\mathrm{AND},\mathrm{OR},\mathrm{NOT}\}$ is universal for classical computation; the set of Hadamard, Toffoli and Phase gates is universal for quantum computation \cite{shi2003both}; and Toffoli gate itself is universal for classical reversible computation \cite{toffoli1980reversible}.
	
	The universality of quantum gates can be translated in a more algebraic language. Let $\mathcal{U}(m)$ be the group of unitary operators on $m$ qubits, and each quantum gate is simply an operator in $\mathcal{U}(m)$. Let $S_m\subset \mathcal{U}(m)$ be the subgroup induced by the permutations on $\{1,\ldots,m\}$, then the collection of circuits built from a set of gates $G$ is exactly the subsemigroup $\gen{G,S_m}$ (generated without inversion). To compute an $n$-qubit unitary, we utilize the remaining $(m-n)$ qubits as ancillas, which starts from state $|0^{m-n}\rangle$ and should also end up as $|0^{m-n}\rangle$. Let $\mathcal{L}_{m,n}\subset \mathcal{U}(m)$ be the subgroup that maps $C^{2^n}\otimes |0^{m-n}\rangle$ to itself, then there is a natural homomorphism $\varphi:\mathcal{L}_{m,n}\rightarrow \mathcal{U}(n)$ as the restriction on the first $n$ qubits. Now we can say that $G$ is universal if for every $n$, there exists $m\geq n$ such that $\varphi(\gen{G,S_m}\cap \mathcal{L}_{m,n})$ is dense in $\mathcal{U}(n)$.
	
	For classical reversible computation, we instead consider the universe $\mathcal{S}(m)$, the group of permutations on $\{0,1\}^m$. To compute an $n$-bit reversible function, the $(m-n)$ bits of ancillas should go from $0^{m-n}$ to $0^{m-n}$, so $\mathcal{L}_{m,n}$ is the subgroup that maps $\{0,1\}^n\cdot 0^{m-n}$ to itself, with a natural homomorphism $\varphi:\mathcal{L}_{m,n}\rightarrow \mathcal{S}(n)$. We can still say a set of gates $G$ is universal if for every $n$, there exists $m\geq n$ such that $\varphi(\gen{G,S_m}\cap \mathcal{L}_{m,n})$ is dense in $\mathcal{S}(n)$, with the discrete topology.
	
	With a little bit more specification, this translation also works for classical computation. The universe becomes $\mathcal{H}(m)$, the semigroup of all functions from $\{0,1\}^m$ to itself. A gate $g$ with fan-in $k$ is then identified with the function
	\[(x_1,\ldots,x_m)\mapsto(g(x_1,\ldots,x_k),x_2,\ldots,x_m).\]
	Again, let $\mathcal{L}_{m,n}$ be the subsemigroup that maps $\{0,1\}^n\cdot 0^{m-n}$ into itself, with the natural homomorphism $\varphi:\mathcal{L}_{m,n}\rightarrow \mathcal{H}(n)$, and with the help of the $\mathrm{FANOUT}$ gate
	\[\mathrm{FANOUT}:(x_1,\ldots,x_m)\mapsto(x_1,x_1,x_3,\ldots,x_m),\]
	we can again say a set of gates $G$ is universal if for every $n$ there exists $m\geq n$ such that $\varphi(\gen{G,S_m,\mathrm{FANOUT}}\cap \mathcal{L}_{m,n})$ is dense in $\mathcal{H}(n)$.
	
	In all of the above examples, the (semi)groups have matrix representations that correspond to each type of computation:
	\begin{enumerate}
		\item Quantum computation $\leftrightarrow$ Unitary matrices
		\item Classical reversible computation $\leftrightarrow$ Permutation matrices
		\item Classical deterministic computation $\leftrightarrow$ Deterministic Transition matrices ($0,1$-matrices that has exactly one entry $1$ in each column)
	\end{enumerate}
	With the computation on $m$ (qu)bits represented by $2^m\times 2^m$ matrices, we can also uniformly define $\mathcal{L}_{m,n}$ as the subsemigroup of matrices $M$ such that $M[i,j]=0$ for all $i\notin\{0,1\}^n\cdot 0^{m-n}$ and $j\in\{0,1\}^n\cdot 0^{m-n}$, while $\varphi$ is simply the operation of taking the submatrix on $\{0,1\}^n\cdot 0^{m-n}$.
	Then it becomes very natural to extend the above discussion on universality to classical randomized computation, as we also have the correspondence:
	\begin{enumerate}
		\setcounter{enumi}{3}
		\item Classical randomized computation $\leftrightarrow$ Stochastic matrices
	\end{enumerate}
	Consider the universe $\mathcal{P}(m)$, the semigroup of all $2^m\times 2^m$ stochastic matrices. The probabilistic part of the computation can be reflected through the single-bit $\mathrm{COINFLIP}$ gate $\begin{pmatrix}
		1/2 & 1/2 \\
		1/2 & 1/2
	\end{pmatrix}$. Then the set of randomized circuits with $m$ bits can be represented by the subsemigroup
	\[\mathcal{R}_m\coloneqq\gen{\mathrm{AND},\mathrm{OR},\mathrm{NOT},\mathrm{COINFLIP},S_m,\mathrm{FANOUT}}.\]
	Since each stochastic matrix is a convex combination of deterministic transition matrices \cite{davis1961markov}, it is not hard to show that indeed for every $n$, there exists $m\geq n$ such that $\varphi(\mathcal{R}_m\cap \mathcal{L}_{m,n})$ is dense in $\mathcal{P}(n)$. In this sense, we can also say that the gate set $\{\mathrm{AND},\mathrm{OR},\mathrm{NOT},\mathrm{COINFLIP}\}$ is universal for classical randomized computation.
	
	Now we turn to the main focus of this paper, doubly stochastic matrices. To some extent they embody the randomized counterpart of reversible computation (although not reversible themselves), as they are exactly the convex combinations of permutation matrices thanks to Birkhoff's Theorem. Computation models based on doubly stochastic matrices have been studied in the regimes of automata \cite{golovkins2002probabilistic,yamasaki2002one}, branching programs \cite{ablayev2005computational,braverman2014pseudorandom,reingold2013pseudorandomness} and distributed computation \cite{nedic2009distributed}.
	
	Considering the fact that both quantum and classical reversible computation possess a finite universal gate set (where the Toffoli gate plays an essential role in both cases), it is tempting to acquire an analogous result for doubly stochastic matrices. However, we have the following fact: Given any finite set of doubly stochastic matrices, if $x$ is the largest entry smaller than $1$, then any product of matrices from the set could only have entries either $1$ or no larger than $x$. That means not only any finite set of doubly stochastic matrices cannot be universal similarly to the previous cases, such a finite set cannot even generate entries that are dense in $[0,1]$.
	
	In fact, the main result of this paper shows something even stronger:
	\begin{theorem}(informally stated) \label{theorem:main}
		The set of entries generated by any finite set of doubly stochastic matrices is nowhere dense.
	\end{theorem}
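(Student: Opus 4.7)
The starting point is the elementary observation in the paragraph immediately preceding the theorem. Let $x<1$ be the largest non-$1$ entry of any matrix in the finite set $S$. Then every entry of every product of matrices from $S$ lies in $\{1\}\cup[0,x]$: by induction on the length of the product, when $A,B$ are doubly stochastic with entries in $\{1\}\cup[0,x]$, either some $A_{ik}=1$ and then $(AB)_{ij}=B_{kj}\in\{1\}\cup[0,x]$, or every $A_{ik}\leq x$ and then $(AB)_{ij}\leq x\sum_k B_{kj}=x$. This already yields an empty open window $(x,1)$ in the generated entry set $E$, but is a priori only one gap.

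To upgrade this single gap into a gap inside every open interval---which is exactly nowhere density---the plan is to apply the structural theorem advertised in the abstract to the closure of $\gen{S}$ in the compact Birkhoff polytope $\mathcal{D}_n$ of $n\times n$ doubly stochastic matrices. Since the entry map is continuous, the closure $\bar E$ equals the set of entries of matrices in $\overline{\gen{S}}$, so it suffices to show this larger entry set has empty interior. The cited theorem, applied to an appropriate convergent subsystem (since $\mathcal{D}_n$ itself is not convergent---cyclic permutations oscillate---one first isolates the convergent part, presumably by passing to convergent subsequences or by quotienting out the cyclic structure coming from permutation generators), should give a decomposition $\overline{\gen{S}}=\gen{S\cup L(S)}$, where $L(S)$ is the set of limits of convergent infinite products from $S$.

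Controlling the entries of $\overline{\gen{S}}$ then proceeds in two steps. Step (a): every $L\in L(S)$ satisfies strong absorbing identities with idempotents of $\overline{\gen{S}}$; idempotent doubly stochastic matrices are, up to permutation, block sums of uniform matrices $\tfrac{1}{k}J_k$, so they have entries in the finite set $\{0\}\cup\{1/k:1\leq k\leq n\}$, and one expects the entries of $L(S)$ to be correspondingly constrained to a tightly structured, nowhere dense set. Step (b): the ``largest non-$1$ entry'' gap argument gets rescaled around each candidate accumulation point $y$ of the entry set. Matrices in $\overline{\gen{S}}$ that realize an entry near $y$ come from a restricted family---essentially those ``aligned'' with an appropriate idempotent from step (a)---inside which a new largest-entry-strictly-below-$y$ exists and produces a fresh gap just below $y$. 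Iterating this rescaling across the structure of $\overline{\gen{S}}=\gen{S\cup L(S)}$ tiles $[0,1]$ with empty open intervals, yielding nowhere density.

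The principal obstacle is making the reduction to the convergent setting rigorous and verifying the ``restrictions'' alluded to in the abstract, because $\mathcal{D}_n$ is not itself convergent and the interaction between permutation generators (which are periodic) and strictly substochastic generators (where $x<1$ actually appears) must be untangled. Once that reduction is in place, propagating the rescaled gap argument through successive applications of the structural characterization of $\overline{\gen{S}}$---so that each layer of ``limits of limits'' inherits gaps---should constitute the main technical content of the proof.
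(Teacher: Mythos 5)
Your opening observation and your identification of the two obstacles (that $\mathcal{D}_n$ is not convergent because of permutations, and that one gap must somehow become a gap in every interval) match the paper's setup, but the two places where you leave the argument as "presumably" and "should" are exactly where the proof lives, and your proposed endgame is not the one that works. First, the reduction to the convergent setting is not done by passing to subsequences or quotienting: the paper introduces the class of $\varepsilon$-domestic matrices $\mathcal{D}_{n,\varepsilon}$, proves it is a compact convergent semigroup with the \emph{finite} set of averagings as a convergence core (finiteness of the core is a hypothesis of the structural theorem that you never address), shows every $M\in\mathcal{D}_n$ factors as $PM'$ with $P$ a permutation and $M'$ domestic, and then uses the fact that conjugation by permutations preserves $\mathcal{D}_{n,\varepsilon}$ to rewrite any word $P_1M_1'\cdots P_kM_k'$ as a single permutation times a word in a fixed finite set of domestic matrices. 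This concrete normal form is the missing reduction.

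Second, and more importantly, your steps (a) and (b) — constraining entries of $L(S)$ via idempotents and then "iterating a rescaled gap argument" to "tile $[0,1]$ with empty open intervals" — are not how the conclusion is reached, and as stated they would not obviously yield nowhere density: producing a gap near each accumulation point layer by layer risks only showing the entry set is meagre, since each new layer of "limits of limits" could a priori contribute new entries, and countable unions of nowhere dense sets need not be nowhere dense. The paper's actual punchline is much blunter: the structural theorem gives $\clos\gen{\mathcal{M}''}=\gen{\mathcal{M}''\cup\mathcal{A}'}$ for a finite $\mathcal{A}'$, so $\clos\gen{\mathcal{M}}$ is contained in $S_n\cdot\clos\gen{\mathcal{M}''}$ and is therefore \emph{countable}; since (by compactness, or by the paper's $M_a=A'MA'$ trick) the closure of the entry set equals the entry set of $\clos\gen{\mathcal{M}}$, that closure is a countable subset of $\mathbb{R}$ and hence has empty interior. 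No rescaling or gap propagation is needed — countability of the closure does all the work, and this is the idea your proposal is missing.
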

	Therefore even with the weakest notion of universality, like being able to approximate the values in some positive-length interval, there is still no finite universal set of doubly stochastic matrices. This also partially explains why computation with doubly stochastic matrices did not receive many highlights, despite being a natural analogue and arising from the study of many other computational models.
	
	\subsection{Proof Overview}
	
	To give an intuition of the proof for \cref{theorem:main}, we start from the simple case described below. For any finite set $P\subset\nn$, let $A_P$ be the doubly stochastic transformation that averages over $P$. In matrix representation, it is the direct sum of a matrix over dimensions in $P$ with all entries being $1/|P|$, and the identity over the rest of dimensions.
	
	Consider the finite set of doubly stochastic matrices being $\mathcal{M}=\{A_{\{i,j\}}:1\leq i<j\leq n\}$. If we keep iteratively applying $A_{\{1,2\}},A_{\{2,3\}}$ and $A_{\{1,3\}}$, the overall transformation will approach $A_{\{1,2,3\}}$. Similarly, we can get arbitrarily close to $A_P$ for every $P\subseteq\{1,\ldots,n\}$. Hence the entries in $\gen{\mathcal{M}}$ have limits $1/k$ for each $k\in\{2,\ldots,n\}$. For every $k\in\{2,\ldots,n-2\}$, since we can approach $A_{\{1,\ldots,k\}}$ and $A_{\{k+1,\ldots,n\}}$, we can also approach the matrix
	\[B\coloneqq A_{\{1,k+1\}}A_{\{1,\ldots,k\}}A_{\{k+1,\ldots,n\}},\]
	which provides a limit value $(1/k+1/(n-k))/2$ for the entries in $\gen{\mathcal{M}}$. Then matrix $B$ can be further used as build blocks to approach other matrices, such as $B^2$ or even $A_{\{1,\ldots,n\}}=\limfty{i}B^i$.
	
	Notice that each $A_P$ is the limit of an infinite product of matrices from $\mathcal{M}$. In fact, the reverse is also true: Every infinite product of matrices from $\mathcal{M}$ converges, and the limit can be written as a finite product of some $A_P$. We call this the \textit{convergent} property, and the set $\gen{A_P: P\subseteq\{1,\ldots,n\}}$ is called a \textit{convergence core} for $\mathcal{M}$. Loosely speaking, a topological semigroup is convergent if every infinite product converges, and a convergence core $\mathcal{A}$ of a subset $\mathcal{M}$ satisfies that every infinite product in $\mathcal{M}\cup \mathcal{A}$ converges in $\gen{\mathcal{M}\cup \mathcal{A}}$. Formal definitions of these concepts are given in \cref{section:con}.
	
	In the above we presented some limit points of $\gen{\mathcal{M}}$, which are all constructed using the limits of infinite products from $\mathcal{M}$, and thus are all in $\gen{A_P}$. Yet a priori, a limit point of $\gen{\mathcal{M}}$ could also just be the limit of a sequence in $\gen{\mathcal{M}}$ without any pattern. However, we actually show that the limit points of $\gen{\mathcal{M}}$ are indeed all generated by $\{A_P\}$. More generally, we have the following result:
	
	\begin{theorem}(informally stated)\label{theorem:con}
		In a first-countable, compact and convergent topological semigroup, if a finite subset $\mathcal{M}$ has a relatively small convergence core $\mathcal{A}$, then the closure of $\gen{\mathcal{M}}$ is contained in $\gen{\mathcal{M}\cup \mathcal{A}}$.
	\end{theorem}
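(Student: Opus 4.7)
The plan is to take an arbitrary $x \in \clos(\gen{\mathcal{M}})$, extract from an approximating sequence an infinite product over $\mathcal{M}$ whose limit---by the convergence core property---already lies in $\gen{\mathcal{M}\cup\mathcal{A}}$, peel it off as a left factor of $x$, and then iterate on the tail until the factorization stabilizes.

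First, using first-countability, I would fix a sequence $x_n \to x$ with each $x_n = a_1^{(n)}\cdots a_{k_n}^{(n)}$ a finite word over the alphabet $\mathcal{M}$. If the lengths $k_n$ stay bounded along some subsequence, then since there are only finitely many $\mathcal{M}$-words of bounded length, a further subsequence is constant and $x \in \gen{\mathcal{M}}$ directly. Otherwise pass to $k_n \to \infty$ and perform a diagonal extraction: since $|\mathcal{M}|<\infty$, some $a_1\in\mathcal{M}$ occurs as the first letter of infinitely many $x_n$; restrict and pick a common second letter $a_2$; iterate. This produces an infinite word $a_1 a_2 a_3 \cdots$ over $\mathcal{M}$ whose prefixes $p_k=a_1\cdots a_k$ match initial segments of $x_n$ along suitable subsequences.

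By the convergent hypothesis the sequence $p_k$ converges, and because $\mathcal{A}$ is a convergence core for $\mathcal{M}$ its limit $y$ lies in $\gen{\mathcal{M}\cup\mathcal{A}}$. For each $k$, writing $x_n=p_k t_n^{(k)}$ along the extracted subsequence and invoking compactness, I would extract convergent tails $t_n^{(k)}\to z_k$; continuity of multiplication in the topological semigroup then forces $p_k z_k=x$. A second compactness extraction picks a subsequence $z_k\to z$, and combining with $p_k\to y$ yields $x=yz$ with $z\in\clos(\gen{\mathcal{M}})$.

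The hard part is the final step: iterate the same construction on $z$ to obtain $z=y'z'$ with $y'\in\gen{\mathcal{M}\cup\mathcal{A}}$ and $z'\in\clos(\gen{\mathcal{M}})$, and repeat. Carried out naively this only produces an infinite right-factorization $x=y_1 y_2 y_3 \cdots$ whose partial products $y_1\cdots y_j$ again converge in $\gen{\mathcal{M}\cup\mathcal{A}}$ by the convergence core property, but whose residual tails $z_j$ a priori refuse to stabilize. Collapsing the factorization into a finite word over $\mathcal{M}\cup\mathcal{A}$ is where the ``relatively small'' hypothesis on $\mathcal{A}$ must genuinely enter, presumably by supplying a well-founded complexity measure on $\clos(\gen{\mathcal{M}})$ that strictly decreases from $z$ to $z'$ (modeled on how each $A_P$ in the motivating example is idempotent and the family $\{A_P\}$ is closed under products up to reindexing). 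Pinning down the right invariant and showing it decreases is the step I expect to consume most of the effort; the compactness and diagonal machinery above is otherwise soft.
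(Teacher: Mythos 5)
Your skeleton matches the paper's strategy---extract an infinite product from the approximating words, use the convergence-core property to rewrite its limit inside $\gen{\mathcal{M}\cup\mathcal{A}}$, append it to an accumulating left factor, and iterate---but there are two genuine gaps, one of which you did not flag. The unflagged one is in the extraction itself: picking the literal first letters of the words $x_n$ by a diagonal argument does not guarantee progress. If, say, the identity (or any $M$ with $L=LM$ for the current left part $L$) occurs as a common prefix, your extracted infinite word can be entirely inert, giving $y$ with $yz=x$ and $z$ no closer to being exhausted than $x$ was; the iteration then loops without changing anything. The paper's \cref{lemma:decompose} fixes this by peeling off not the first letter but the first letter $M$ at which the left part actually moves, i.e.\ $LR=LMR'$ with $L\neq LM$, and then pigeonholing over the finitely many such $M$. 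This strict-progress condition $L_{\alpha+1}\neq L_\alpha$ is not a cosmetic refinement: it is exactly what feeds the termination argument.

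The second gap is the one you correctly identified as the hard part, but your guess at the invariant points in the wrong place. The well-founded measure does not live on the residual tails $z\in\clos\gen{\mathcal{M}}$; it lives on the elements of $\mathcal{A}$ produced at limit stages. The paper defines a strict partial order $\lhd$ on the convergent semigroup ($A\lhd B$ iff $B=SAT$ with $A\neq AT$, \cref{definition:po}), formalizes ``relatively small'' as $\mathcal{A}$ having \emph{finite height} under $\lhd$, and shows that if the process never reaches $D$ then each deeper limit stage yields $A_\beta\lhd A_\alpha$, producing arbitrarily long chains in $\mathcal{A}$---a contradiction. Moreover the bookkeeping is genuinely transfinite: after $\omega$ extract-and-append steps you collapse the accumulated infinite product and restart, but the collapsed stages themselves must be iterated and re-collapsed, so the construction in \cref{lemma:sequence} is indexed by all ordinals below $\omega^\omega$ (with $\rho(\alpha)$ and $\ell(\alpha)$ from the Cantor normal form controlling which previous stage each new one extends), not by a single $\omega$-sequence of rounds. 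Without the strict-progress condition and the $\lhd$-chain argument on $\mathcal{A}$, the ``soft'' compactness machinery you describe cannot be closed off into a proof.
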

	
	The proof of \cref{theorem:con} is presented in \cref{section:omega}. The idea of the proof is that for every sequence in $\gen{\mathcal{M}}$ that converges to some $D\in\clos\gen{\mathcal{M}}$, we can maintain a `left part' $L$ in $\gen{\mathcal{M}\cup \mathcal{A}}$ and a `right sequence' $(R_i)_{i\in\nn}$ in $\gen{\mathcal{M}}$, such that $L\cdot\limfty{i}R_i$ always equals to $D$. The goal is to make $L=D$. Whenever $L\neq D$, we show that it is always possible to extract an infinite product from the left of $(R_i)_{i\in\nn}$, and since it converges in $\gen{\mathcal{M}\cup \mathcal{A}}$ we can append it to the right of $L$. This extract-and-append process might go on indefinitely, but that means $L$ becomes an infinite product itself, and thus can be rewritten as an element in $\gen{\mathcal{M}\cup \mathcal{A}}$, so that the process can continue. Therefore, the time series of this process can be indexed by ordinal numbers. In the actual proof, we construct $L_\alpha$ and $(R_{\alpha,i})_{i\in\nn}$ for all $\alpha<\omega^\omega$ via transfinite induction, and show that there must exist $\alpha$ with $L_\alpha=D$.
	
	The semigroup $\mathcal{D}_n$ of $n\times n$ doubly stochastic matrices is not convergent, i.e. not all infinite products converges, as it contains all permutations on $n$ elements. To handle this, in \cref{section:domestic} we define a subclass of matrices called $\varepsilon$-\textit{domestic matrices} to rule out those matrices that essentially performs permutations. It turns out for sufficiently small $\varepsilon>0$, the set $\mathcal{D}_{n,\varepsilon}$ of $\varepsilon$-domestic matrices is indeed a convergent semigroup, and indeed possess a finite convergence core, which is proved in \cref{section:averaging}. Hence the closure of every finitely generated subsemigroup of $\mathcal{D}_{n,\varepsilon}$ is also finitely generated, and thus countable. Finally, in \cref{section:final} we show that every doubly stochastic matrices can be decomposed into an $\varepsilon$-domestic matrices and a permutation, which combined with above results together proves \cref{theorem:main}.
	
	\subsection{Related Work}
	
	Although the viewpoint of computation as products of matrices has come a long way in quantum computation and (implicitly) in classical computation, the universality has rarely been discussed from this perspective. To the best of our knowledge, the only discussion with similar frameworks of universality was \cite{poritz2013universal}.
	
	Theory of semigroups is a crucial component in algebraic automata theory (see \cite{pin1997syntactic} for an exposition). Results from the study on semigroups of matrices have also found successful application to problems in computer science \cite{kiefer2019efficient}.
	
	Convergence of infinite products of (doubly) stochastic matrices has been extensively studied in the analysis of ergodic Markov chains and consensus algorithms (e.g. \cite{chatterjee1977towards,olfati2004consensus,touri2012backward}). In particular, our result on the convergent property of $\varepsilon$-domestic matrices can be seen as an improvement of the result in \cite{touri2012backward} on doubly stochastic matrices. Convergent properties for more general set of matrices was also studied in \cite{daubechies1992sets}.
	
	\section{Preliminaries}
	
	For a vector $v$ and a matrix $M$, we use $v[i]$ and $M[i,j]$ to denote their entries at $i$-th row and $j$-th column. The symmetric group over $n$ elements is denoted by $S_n$. For a permutation $P\in S_n$, we denote its action on an element $i\in\{1,\ldots,n\}$ and a subset $X\subseteq\{1,\ldots,n\}$ as $Pi$ and $PX$. We also abuse the notation to use $P$ as an $n\times n$ permutation matrix, where $P[i,j]=1$ if and only if $i=Pj$.
	
	A non-negative real square matrix is \textit{stochastic} if every column sums to $1$. A stochastic matrix is \textit{doubly stochastic} if both every row and every column sums to $1$. The set of all $n\times n$ doubly stochastic matrices is denoted as $\mathcal{D}_n$. We use $|\cdot|_1$ and $|\cdot|_\infty$ to denote the $\ell_1$ and $\ell_\infty$ norms of vectors. Let $\triangle_n=\{x\in[0,1]^n:|x|_1=1\}$ be the standard $(n-1)$-simplex. It is well known that $\mathcal{D}_n$ induces a preorder on $\triangle_n$ known as \textit{majorization} (See e.g. \cite{marshall2011inequalities}):
	
	\begin{definition}
		For two vectors $p,q\in\triangle_n$, we say $p$ \textit{majorizes} $q$, denoted as $p\succ q$, if there exists $M\in\mathcal{D}_n$ such that $q=Mp$. Equivalently, $p \succ q$ if and only if for all $k\in\{1,\ldots,n\}$,
		\[\sum_{i=1}^k p\sort[i]\geq \sum_{i=1}^k q\sort[i],\]
		where $p\sort\in\triangle_n$ is vector $p$ sorted in descending order, and similarly is $q\sort$.
	\end{definition}
	
	\subsection*{Ordinal Numbers}
	
	For more formal discussion on ordinal numbers, readers can refer to \cite{jech2013set}. Basically speaking, a set $S$ is an \textit{ordinal number} if it is well-ordered under $\in$, and every element of $S$ is also a subset of $S$. We use lowercase Greek letters $\alpha,\beta,\ldots$ to denote the ordinal numbers.
	
	If $\alpha$ is an ordinal number, then $\alpha+1\coloneqq\alpha\cup\{\alpha\}$ is the successor ordinal of $\alpha$. An ordinal number which is not a successor is called a \textit{limit ordinal}. The smallest nonzero limit ordinal is denoted by $\omega$, which is exactly the set of natural numbers $\mathbb{N}$. For every nonzero ordinal $\alpha$, a transfinite sequence indexed by $\alpha$ is a function $F$ with domain $\alpha$, denoted as $(F(\beta))_{\beta<\alpha}$. When $\alpha=\omega$, this is the infinite sequence in the usual sense, also denoted as $(F_i)_{i\in\nn}$.
	
	Every nonzero ordinal $\alpha$ can be represented in a unique \textit{Cantor Normal Form}:
	\[\alpha=\omega^{\beta_1}\cdot c_1+\cdots+\omega^{\beta_k}\cdot c_k,\]
	where $k,c_1,\ldots,c_k$ are nonzero natural numbers and $\beta_1>\ldots>\beta_k$ are ordinals. We use $\ell(\alpha)$ to denote the lowest exponent $\beta_k$, and $\rho(\alpha)$ to denote the smallest $\gamma$ such that $\gamma+\omega^{\ell(\alpha)}=\alpha$, which can be written as
	\[\rho(\alpha)=\omega^{\beta_1}\cdot c_1+\cdots+\omega^{\beta_{k-1}}\cdot c_{k-1}+\omega^{\beta_k}\cdot(c_k-1).\]
	
	The \textit{transfinite induction} on ordinals goes by the following scheme:
	\begin{theorem}
		Let $P(\alpha)$ be a predicate on ordinals. Assume that:
		\begin{enumerate}
			\item $P(0)$ is true;
			\item $P(\alpha)\Rightarrow P(\alpha+1)$ for every ordinal $\alpha$;
			\item $\left(\bigwedge_{\beta<\alpha} P(\beta)\right)\Rightarrow P(\alpha)$ for every nonzero limit ordinal $\alpha$.
		\end{enumerate}
		Then $P(\alpha)$ is true for every ordinal $\alpha$.
	\end{theorem}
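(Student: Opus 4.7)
The plan is to argue by minimal counterexample. Suppose for contradiction that $P(\alpha)$ fails for some ordinal $\alpha$; call this particular ordinal $\gamma$. The collection of all ordinals on which $P$ fails is a proper class, so I cannot directly invoke well-foundedness on it, but I can restrict attention to the genuine set $S \coloneqq \{\beta \in \gamma+1 : \neg P(\beta)\}$. This is a nonempty subset of the ordinal $\gamma+1$, which is itself well-ordered by $\in$, so $S$ has a least element $\alpha_0$.

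The key step is then a trichotomy on $\alpha_0$: either $\alpha_0 = 0$, or $\alpha_0$ is a successor ordinal, or $\alpha_0$ is a nonzero limit ordinal. In the first case, hypothesis (1) directly contradicts $\alpha_0 \in S$. In the second case, write $\alpha_0 = \beta + 1$; by the minimality of $\alpha_0$ in $S$ we have $P(\beta)$, and hypothesis (2) yields $P(\alpha_0)$, a contradiction. In the third case, minimality gives $P(\beta)$ for every $\beta < \alpha_0$, so hypothesis (3) yields $P(\alpha_0)$, again a contradiction. Hence no such $\gamma$ exists, and $P(\alpha)$ holds for every ordinal $\alpha$.

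I do not anticipate a substantive obstacle; this is a textbook argument. The only point requiring care is the one noted above, namely that the putative counterexample class must be cut down to a set before invoking well-foundedness. Once that is handled, the trichotomy into zero, successor, and limit cases maps precisely onto the three hypotheses of the theorem, and each case closes in a single line.
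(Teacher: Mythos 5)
Your proof is correct. The paper does not actually prove this theorem---it is stated as the standard transfinite induction scheme and cited from the set-theory literature---so there is no competing argument to compare against; your minimal-counterexample proof, including the care taken to cut the class of counterexamples down to the set $\{\beta\in\gamma+1:\neg P(\beta)\}$ before invoking well-orderedness, is the canonical one and closes all three cases of the trichotomy exactly as the hypotheses require.
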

	In \cref{section:omega}, we use transfinite induction on $\omega^\omega$ to prove \cref{theorem:con}. Notice that when $\alpha<\omega^\omega$ we have $\ell(\alpha)\in\mathbb{N}$, so that we can perform subtraction on $\ell(\alpha)$.
	
	\subsection*{Topological Semigroups}
	
	For standard definitions in topology, readers can refer to \cite{willard2012general}. For the sake of simplicity, in this paper we mostly consider first-countable spaces, so that we can talk about closedness and compactness through sequences instead of nets.
	
	A semigroup is a nonempty set equipped with an associative binary operator. A topological semigroup is a semigroup with a Hausdorff topology such that the semigroup operator is continuous. In particular, in a topological semigroup, if both $\limfty{i}A_i$ and $\limfty{i}B_i$ exists, then
	\[\limfty{i}A_iB_i=\left(\limfty{i}A_i\right)\left(\limfty{i}B_i\right).\]
	For example, the set of all $n\times n$ complex matrices forms a topological semigroup, with the semigroup operator being matrix multiplications and the topology induced by any matrix norm. The set $\mathcal{D}_n$ of all $n\times n$ doubly stochastic matrices is a subsemigroup of it.
	
	An element $\epsilon$ of a semigroup $\mathcal{S}$ is an \textit{identity} if $A\epsilon=\epsilon A=A$ for all $A\in\mathcal{S}$. For a subset $\mathcal{M}\subseteq\mathcal{S}$, we use $\gen{\mathcal{M}}$ to denote the subsemigroup generated by $\mathcal{M}$. Notice that if $\mathcal{S}$ has an identity $\epsilon$, then we also assume $\epsilon\in\gen{\mathcal{M}}$.
	
	For a subset $X$ of a topological space, we use $\clos X$ to denote its closure. For sanity check we need the following proposition, so that $\clos\gen{\mathcal{M}}$ is always a subsemigroup in any topological semigroup.
	
	\begin{proposition}\label{lemma:closure}
		In a topological semigroup $\mathcal{S}$, if $\mathcal{T}$ is a subsemigroup, then $\clos \mathcal{T}$ is also a subsemigroup.
	\end{proposition}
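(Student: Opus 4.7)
The plan is a very short continuity argument. I need to show that $\clos\mathcal{T}$ is closed under the semigroup operation, i.e., for any $A,B\in\clos\mathcal{T}$, the product $AB$ also lies in $\clos\mathcal{T}$. Nonemptiness is immediate from $\mathcal{T}\subseteq\clos\mathcal{T}$ and $\mathcal{T}\neq\emptyset$.

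First, since the paper assumes first-countability, I would pick sequences $(A_i)_{i\in\nn}$ and $(B_i)_{i\in\nn}$ in $\mathcal{T}$ with $A_i\to A$ and $B_i\to B$; such sequences exist precisely because closure in a first-countable space coincides with sequential closure. Because $\mathcal{T}$ is a subsemigroup, the products $A_iB_i$ all belong to $\mathcal{T}$.

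Next, I would invoke the joint continuity of the semigroup operator, which is built into the definition of a topological semigroup recalled just above the proposition. This gives $\limfty{i}A_iB_i=(\limfty{i}A_i)(\limfty{i}B_i)=AB$, so $AB$ is a limit of a sequence in $\mathcal{T}$ and hence belongs to $\clos\mathcal{T}$. That closes the argument.

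There is essentially no obstacle: the only subtlety is that in a general (non-first-countable) Hausdorff topological semigroup one would need to replace sequences with nets, but since the paper restricts to first-countable spaces precisely to avoid this, the sequential version suffices. I would therefore keep the write-up to a couple of lines.
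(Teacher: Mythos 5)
Your argument is correct, and in fact the paper states this proposition without giving any proof at all, so there is nothing to diverge from. One small remark on generality: the proposition as stated applies to an arbitrary topological semigroup, while your sequential argument genuinely needs first-countability (which the paper does assume throughout, and which you correctly flag). If you want to prove the literal statement without invoking nets, the cleanest route is the purely topological one: writing $m$ for the (continuous) multiplication map, one has $\clos\mathcal{T}\cdot\clos\mathcal{T}=m(\clos\mathcal{T}\times\clos\mathcal{T})=m\left(\clos(\mathcal{T}\times\mathcal{T})\right)\subseteq\clos\left(m(\mathcal{T}\times\mathcal{T})\right)\subseteq\clos\mathcal{T}$, using that a continuous map sends the closure of a set into the closure of its image and that closure commutes with finite products. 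Either version is a perfectly acceptable couple of lines; yours matches the conventions the paper sets up just above the proposition.
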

	
	Finally, a topological semigroup is \textit{profinite} if it is a projective limit of finite discrete semigroups (See \cite{kyriakoglou2017profinite} for a detailed survey on the subject). Profinite semigroup plays an important role in the study of formal language and automata theory \cite{almeida2005profinite}. In this paper we make use of the following result:
	
	\begin{theorem}[\cite{almeida2005profinite}]\label{theorem:profinite}
		A compact topological semigroup is profinite if and only if it is totally disconnected.
	\end{theorem}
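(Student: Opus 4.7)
The plan is to prove the two implications separately. For the forward direction, suppose $\mathcal{S}$ is profinite, realized as $\varprojlim_i \mathcal{S}_i$ with each $\mathcal{S}_i$ finite discrete. Then $\mathcal{S}$ embeds as a closed subspace of the Tychonoff product $\prod_i \mathcal{S}_i$; since every finite discrete space is totally disconnected and Hausdorff, and since both properties pass to products and to subspaces, $\mathcal{S}$ is totally disconnected.

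For the converse, assume $\mathcal{S}$ is compact Hausdorff and totally disconnected. I would consider the family $\Sigma$ of all closed congruences $\sim$ on $\mathcal{S}$ with finite quotient $\mathcal{S}/\sim$ (automatically a finite discrete topological semigroup). This family is directed under refinement, because the intersection of two members is again a closed congruence whose quotient embeds into the product of the two original finite quotients. The quotient maps $\pi_\sim : \mathcal{S} \to \mathcal{S}/\sim$ are continuous surjective homomorphisms and assemble into a continuous homomorphism $\Phi : \mathcal{S} \to \varprojlim_{\sim \in \Sigma} \mathcal{S}/\sim$. Surjectivity of $\Phi$ follows because $\Phi(\mathcal{S})$ is compact, hence closed in the projective limit, and projects onto every factor $\mathcal{S}/\sim$, forcing density and therefore equality with the whole limit. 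With compactness of the source and Hausdorffness of the target, any bijective continuous $\Phi$ is automatically a homeomorphism of topological semigroups, so the remaining task is injectivity.

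Injectivity is the crux and the main obstacle I anticipate. Given distinct $a, b \in \mathcal{S}$, one must produce $\sim \in \Sigma$ separating them. Total disconnectedness combined with compact Hausdorffness supplies a clopen $U$ with $a \in U$ and $b \notin U$, so it suffices to refine $\{U, \mathcal{S} \setminus U\}$ to a finite clopen congruence. I would use the syntactic congruence $\sim_U$ defined by declaring $x \sim_U y$ whenever $x$ and $y$ agree on membership in $U$ and on the membership in $U$ of all one-sided and two-sided products with elements of $\mathcal{S}$. This relation is manifestly a closed congruence, and it separates $a$ and $b$ directly through the membership condition on $U$. The key point is that $\sim_U$ has finite index. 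To see this, consider the continuous ternary multiplication $\mu : \mathcal{S}^3 \to \mathcal{S}$, $(u,x,v) \mapsto uxv$. Its preimage $\mu^{-1}(U)$ is clopen in $\mathcal{S}^3$, and since clopen rectangles form a base of the product topology and $\mathcal{S}^3$ is compact, $\mu^{-1}(U)$ can be written as a finite disjoint union $\bigsqcup_{k=1}^K A_k \times X_k \times B_k$ of clopen boxes. Whenever $\{k : x \in X_k\} = \{k : y \in X_k\}$, one directly verifies $uxv \in U \Leftrightarrow uyv \in U$ for all $u, v \in \mathcal{S}$; analogous finite box decompositions for the binary multiplication handle the one-sided products, and the clopen set $U$ itself handles the zero-sided case. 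Hence $\sim_U$-equivalence is determined by membership in finitely many clopen sets, so $\sim_U$ has finite index, producing the required $\sim \in \Sigma$ and completing the proof.
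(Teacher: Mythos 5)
The paper does not prove this statement at all---it is quoted from Almeida's survey as a black box---so there is no internal proof to compare against; your argument stands or falls on its own, and it stands. Both directions are correct: the forward direction is the routine observation that total disconnectedness passes to products and subspaces, and your converse is essentially the classical Numakura-style argument, with the syntactic congruence $\sim_U$ of a clopen set $U$ supplying the separating finite-index closed congruence and the finite disjoint clopen-box decomposition of $\mu^{-1}(U)$ correctly forcing finite index. The one step you lean on without stating it is that a compact Hausdorff totally disconnected space is zero-dimensional (clopen sets form a base); you invoke this both to find the clopen $U$ separating $a$ from $b$ and to get the base of clopen rectangles in $\mathcal{S}^3$. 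That is a standard fact (components coincide with quasi-components in compact Hausdorff spaces), but since it is the only place where total disconnectedness actually enters, it deserves an explicit sentence. With that noted, the proof is complete and, if anything, more self-contained than the paper, which simply defers to the literature.
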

	
	\section{Convergent Topological Semirgoups}\label{section:con}
	
	\begin{definition}
		The \textit{(right) infinite product} $\prod_{i\in\nn} S_i$ in a topological semigroup $\mathcal{S}$ is the limit of the sequence $(S_0\cdots S_i)_{i\in\nn}$. If the limit exists in $\mathcal{S}$, we say the infinite product \textit{converges}.
	\end{definition}
	
	\begin{definition}
		A topological semigroup is \textit{(right) convergent} if every right infinite product converges.
	\end{definition}

	One can define left infinite product and left convergence symmetrically. In the rest of this paper, we stick with the right versions, and omit `right' for succinctness.

	\begin{example}
		Here are some examples of convergent semigroups:
		\begin{itemize}
			\item The semigroup $[0,1]$ with multiplication and the standard topology is convergent.
			\item The extended non-negative real line $[-\infty,+\infty]$ with semigroup operator $\max$ is convergent.
			\item Let $M_1=\begin{pmatrix}1 & 1/2 \\ 0 & 1/2\end{pmatrix}$ and $M_2=\begin{pmatrix}1/2 & 0 \\ 1/2 & 1\end{pmatrix}$. Then the semigroup of matrices $\clos\gen{M_1,M_2}$ with matrix multiplication is convergent. However it is not convergent with respect to left infinite products.
		\end{itemize}	
	\end{example}
	
	One may notice that none of the above examples is topological group. In fact we have the more general results:
	
	\begin{lemma}\label{lemma:monotone}
		In a convergent semigroup $\mathcal{S}$, if $AS=B$ and $BT=A$ then $A=B$.
	\end{lemma}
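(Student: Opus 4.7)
The plan is to use the convergence hypothesis on the particular infinite product $STSTST\cdots$ and exploit the fact that the partial products, when multiplied by $A$ on the left, alternate between $A$ and $B$; in a Hausdorff space this forces $A=B$.

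First, I would record the two algebraic identities that follow by substitution: $A=BT=(AS)T=AST$ and symmetrically $B=AS=(BT)S=BTS$. By induction these give $A(ST)^n=A$ and $A(ST)^nS=AS=B$ for every $n\in\nn$, so the sequence of partial products of $A$ times the word $S,T,S,T,\ldots$ is just $B,A,B,A,\ldots$.

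Next, consider the sequence $(X_i)_{i\in\nn}$ defined by $X_{2k}=S$ and $X_{2k+1}=T$. By the convergent hypothesis, the right infinite product $P\coloneqq\prod_{i\in\nn} X_i=\lim_{n\to\infty} X_0X_1\cdots X_n$ exists in $\mathcal{S}$. Since left multiplication by $A$ is continuous (the semigroup operator being jointly continuous), the sequence $\bigl(A\cdot X_0 X_1\cdots X_n\bigr)_{n\in\nn}$ converges to $AP$. But by the previous paragraph this sequence is exactly $B,A,B,A,\ldots$. A convergent sequence in a Hausdorff space has a unique limit that agrees with the limit of any subsequence, so both constant subsequences $A,A,A,\ldots$ and $B,B,B,\ldots$ have the same limit, forcing $A=B$.

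The main subtlety, and the only place where the hypothesis of convergence is really used, is ensuring that the infinite product $STSTST\cdots$ actually has a limit in $\mathcal{S}$; without this we would only know that the sequence $(AX_0\cdots X_n)$ oscillates between $A$ and $B$ with no reason to conclude equality. Everything else is a routine use of associativity, substitution of the given relations, and the uniqueness of limits in a Hausdorff topological semigroup, so I do not anticipate any further obstacles.
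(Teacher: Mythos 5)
Your proof is correct and follows essentially the same route as the paper's: the paper applies the convergence hypothesis directly to the infinite product $ASTST\cdots$, whose partial products are literally the alternating sequence $A,B,A,B,\ldots$, and concludes $A=B$ from uniqueness of limits in a Hausdorff space. Your variant of first converging $STST\cdots$ and then left-multiplying by $A$ via continuity is a minor, equally valid repackaging of the same idea.
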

	\begin{proof}
		The infinite product $ASTST\cdots$ is the limit of the sequence $(A,B,A,B,A,\ldots)$, which exists only if $A=B$ as $\mathcal{S}$ is Hausdorff.
	\end{proof}

	\begin{corollary}\label{corollary:monotone}
		In a convergent semigroup $\mathcal{S}$, if an infinite product $\prod_{i\in\nn} S_i=A$ and $S_0=A$, then for every $i\in\nn$, $S_0\cdots S_i=A$.
	\end{corollary}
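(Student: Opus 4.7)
The plan is to reduce the claim to \cref{lemma:monotone} by exhibiting, for each fixed $i$, elements that relate $A$ and the partial product $B_i\coloneqq S_0 S_1 \cdots S_i$ in both directions. Once the two identities $AS = B_i$ and $B_i T = A$ are produced inside $\mathcal{S}$, the lemma immediately yields $A = B_i$.

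The case $i = 0$ is trivial since $B_0 = S_0 = A$ by hypothesis. So fix $i \geq 1$. First, the left identity is algebraic: by associativity and $S_0 = A$,
\[
A\cdot (S_1 \cdots S_i) \;=\; S_0 \cdot (S_1 \cdots S_i) \;=\; B_i,
\]
so we may take $S = S_1\cdots S_i$. Second, for the right identity, we use the convergence hypothesis: since $\mathcal{S}$ is convergent, the tail infinite product
\[
T \;\coloneqq\; \prod_{j \geq i+1} S_j \;=\; \lim_{k\to\infty} S_{i+1} S_{i+2} \cdots S_k
\]
exists in $\mathcal{S}$. By continuity of the semigroup operation,
\[
B_i \cdot T \;=\; \lim_{k\to\infty} B_i\cdot S_{i+1}\cdots S_k \;=\; \lim_{k\to\infty} S_0 \cdots S_k \;=\; A,
\]
and the equality of the two limits uses only that $\mathcal{S}$ is Hausdorff. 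Applying \cref{lemma:monotone} with this $S$ and $T$ gives $A = B_i$, which completes the induction.

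I do not anticipate a real obstacle: the whole argument is a one-line use of the preceding lemma, with the convergent hypothesis invoked only to guarantee that the tail product $T$ lives in $\mathcal{S}$. The only subtlety is remembering to isolate the base case $i=0$, because for $i=0$ the candidate element $S = S_1\cdots S_i$ would be an empty product and $\mathcal{S}$ is not assumed to contain an identity.
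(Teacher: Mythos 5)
Your proposal is correct and is essentially identical to the paper's proof: both set $B = S_0\cdots S_i = A(S_1\cdots S_i)$, use convergence to obtain the tail product $T=\prod_{j>i}S_j$ with $BT=A$, and conclude by \cref{lemma:monotone}. (The closing phrase ``completes the induction'' is a slip --- the argument is direct for each fixed $i$, not inductive --- but this does not affect correctness.)
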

	\begin{proof}
		Since $\mathcal{S}$ is convergent, for every $k\in\nn$ the infinite product $\prod_{i>k}S_i$ converges, and  $A=A(S_1\cdots S_k)\cdot\prod_{i>k}S_i$. Let $A(S_1\cdots S_k)$ be $B$ in \cref{lemma:monotone}, then we have $S_0\cdots S_k=A(S_1\cdots S_k)=A$.
	\end{proof}

	\begin{lemma}\label{lemma:absorb}
		In a convergent semigroup $\mathcal{S}$, if an infinite product $\prod_{i\in\nn} S_i=A$ and $S$ appears infinitely many times in $(S_i)_{i\in\nn}$, then $AS=A$.
	\end{lemma}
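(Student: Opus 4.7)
The plan is a short argument that relies only on continuity of multiplication, without appealing to the deeper structure of convergent semigroups. Write $P_k \coloneqq S_0 S_1 \cdots S_k$ for the partial products; the hypothesis $\prod_{i\in\nn} S_i = A$ says $(P_k)_{k\in\nn} \to A$, so every subsequence of $(P_k)$ also converges to $A$.

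The key observation is that whenever $S_k = S$ with $k \geq 1$, we have $P_k = P_{k-1} \cdot S$. Since $S$ appears infinitely often in $(S_i)_{i\in\nn}$, I would enumerate the indices $i_1 < i_2 < \cdots$ at which $S_{i_j} = S$, discarding the first one if $i_1 = 0$ so that all $i_j \geq 1$. Both $(P_{i_j - 1})_{j\in\nn}$ and $(P_{i_j})_{j\in\nn}$ are subsequences of $(P_k)$ and therefore both converge to $A$, and continuity of the semigroup operator then gives
\[ AS = \Bigl(\lim_{j\to\infty} P_{i_j - 1}\Bigr) \cdot S = \lim_{j\to\infty}\bigl(P_{i_j - 1} \cdot S\bigr) = \lim_{j\to\infty} P_{i_j} = A. \]

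There is essentially no obstacle here; the only subtlety is the bookkeeping to ensure $i_j - 1$ is a legitimate index, which is handled by the throwaway step above. The hypothesis that $\mathcal{S}$ is a convergent semigroup is used only implicitly, through the assumed convergence of $\prod_{i\in\nn} S_i$ to $A$; the argument itself relies only on Hausdorffness (to identify limits uniquely) and continuity of multiplication, both of which are built into the definition of a topological semigroup.
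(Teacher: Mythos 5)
Your proof is correct and is essentially identical to the paper's: both enumerate the indices where $S$ occurs, note that the partial products ending just before and just at those indices are subsequences converging to $A$, and conclude $AS=A$ by continuity of multiplication. Your version merely adds the (harmless) bookkeeping about discarding a possible occurrence at index $0$.
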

	\begin{proof}
		Suppose $(i_k)_{k\in\nn}$ is the strictly increasing sequence of indices such that $S_{i_k}=S$. Then
		\[AS=\limfty{k}S_0\cdots S_{i_k-1}\cdot S=\limfty{k}S_0\cdots S_{i_k}=A.\qedhere\]
	\end{proof}
	
	\begin{definition}\label{definition:po}
		In a convergent semigroup $\mathcal{S}$, we define $A\lhd B$ if there exists $S,T\in\mathcal{S}$ such that $B=SAT$, and $A\neq AT$.
	\end{definition}
	
	\begin{proposition}
		For every convergent semigroup, the binary relation $\lhd$ is a strict partial order.
	\end{proposition}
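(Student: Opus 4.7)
My plan is to establish irreflexivity and transitivity separately. Transitivity will be a direct substitution of the witnessing elements together with the alternating-sequence trick used to prove \cref{lemma:monotone}, whereas irreflexivity will require showing that a hypothetical witness $A = SAT$ with $AT \neq A$ forces $A$ itself to equal the infinite product $A T^\infty$, so that \cref{lemma:absorb} delivers the contradiction $AT = A$.

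For transitivity, I would take witnesses $B = S_1 A T_1$ with $A T_1 \neq A$ and $C = S_2 B T_2$ with $B T_2 \neq B$, and substitute to obtain $C = (S_2 S_1)\, A\, (T_1 T_2)$. The only non-automatic step is to rule out $A T_1 T_2 = A$. If that equality held, then the partial products of the infinite product $(A, T_1, T_2, T_1, T_2, \ldots)$ would cycle between $A$ and $A T_1$; since $A \neq A T_1$, this sequence cannot converge, contradicting the convergent property.

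For irreflexivity, I would assume $A = SAT$ and $AT \neq A$ and iterate to get $A = S^n A T^n$ for all $n \in \nn$, i.e., $A = S^n B_n$ where $B_n \coloneqq A T^n$. Since $S \cdot S \cdot S \cdots$ and $A \cdot T \cdot T \cdot T \cdots$ are convergent infinite products, both $S^n \to S^\infty$ and $B_n \to B_\infty = A T^\infty$; continuity of multiplication then yields $A = S^\infty B_\infty$. On the other hand, $S B_n = SAT \cdot T^{n-1} = A T^{n-1} = B_{n-1}$ for $n \geq 1$, so passing to the limit gives $S B_\infty = B_\infty$, and by iteration $S^k B_\infty = B_\infty$ for every $k$, whence $S^\infty B_\infty = B_\infty$. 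Combining, $A = B_\infty = A T^\infty$, so the infinite product $(A, T, T, T, \ldots)$ converges to $A$; since $T$ appears infinitely often in it, \cref{lemma:absorb} yields $AT = A$, contradicting the hypothesis.

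The main obstacle I anticipate is the identification $A = A T^\infty$ in the irreflexivity step. A direct finite-stage manipulation does not work because the relation $SAT = A$ is two-sided while \cref{lemma:monotone} is one-sided; the trick is to run two limiting processes simultaneously (the $S^n$ on the left and the $T^n$ on the right) and use the absorption identity $S B_\infty = B_\infty$ to collapse $S^\infty B_\infty$ down to $B_\infty$, which is precisely what equates $A$ with $A T^\infty$.
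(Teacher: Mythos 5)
Your proof is correct and follows essentially the same route as the paper: transitivity via the alternating-product argument underlying \cref{lemma:monotone}, and irreflexivity by passing $A=S^nAT^n$ to the limit, showing $S$ fixes $AT^\omega$, and collapsing to $A=AT^\omega$. The only cosmetic difference is that you invoke \cref{lemma:absorb} rather than \cref{corollary:monotone} to derive the final contradiction $AT=A$.
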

	\begin{proof}
		It suffices to prove that $\lhd$ is transitive and irreflexive.
		\begin{itemize}
			\item (Transitivity) Suppose $A\lhd B$ and $B\lhd C$, where $B=S_1AT_1$ and $C=S_2BT_2$. Then $C=S_2S_1AT_1T_2$, and since $A\neq AT_1$ we have $A\neq AT_1T_2$ by \cref{lemma:monotone}. Thus $A\lhd C$.
			\item (Irreflexivity) Assume by contraction $A\lhd A$, that is $A=SAT$ and $A\neq AT$ for some $A,S,T\in\mathcal{S}$. Let $S^\omega=\prod_{i\in\nn}S$ and $T^\omega=\prod_{i\in\nn}T$. Then we have
			\[A=SAT=S^2AT^2=\ldots=\limfty{i}S^iAT^i=\limfty{i}S^i\cdot A\cdot \limfty{i}T^i=S^\omega AT^\omega.\]
			Therefore $AT^\omega=SAT\cdot \limfty{i}T^i=SAT^\omega$, and
			\[AT^\omega=SAT^\omega=S^2AT^\omega=\ldots=\limfty{i}S^iAT^\omega=S^\omega AT^\omega=A.\]
			That means $AT=A$ by \cref{corollary:monotone}, which is a contradiction.\qedhere
		\end{itemize}
	\end{proof}

	The \textit{height} of a partially ordered set is the maximum cardinality of a chain. Since $\lhd$ is a strict partial order on every convergent semigroup, we can define the height of any subset as its height with the partial order induced by $\lhd$.
	
	Finally, we define the concept of convergence cores:

	\begin{definition}
		Given a convergent semigroup $\mathcal{S}$ and a subset $\mathcal{M}\subseteq\mathcal{S}$ , a subset $\mathcal{A}\subseteq\mathcal{S}$ is a \textit{convergence core} of $\mathcal{M}$ if for every infinite product $\prod_{i\in\nn} S_i$ in $\mathcal{M}\cup\mathcal{A}$, there exists $k\in\nn$ such that $\prod_{i>k} S_i\in\mathcal{A}$.
	\end{definition}

	\begin{example}
		In correspondence with the previous examples of convergent semigroups, we have:
		\begin{itemize}
			\item In the semigroup $[0,1]$ with multiplication, for any $x<1$, $\{0\}\cup(x,1]$ is a convergence core of $[0,1]$, and $\{0\}$ is a convergence core of $[0,x]$.
			\item In the semigroup $[-\infty,+\infty]$ with operator $\max$, for any subset $X\subset [-\infty,+\infty]$, $X$ is a convergence core of itself.
			\item In the semigroup $\clos\gen{M_1,M_2}$ with matrix multiplication, the set $\left\{\begin{pmatrix}
			x & x \\ 1-x & 1-x
			\end{pmatrix}:x\in[0,1]\right\}$ is a convergence core of the whole semigroup.
		\end{itemize}
	\end{example}

	\section{$\omega^\omega$-Indexed Sequence of Limits in Convergent Semirgoups} \label{section:omega}
	
	\begin{lemma}\label{lemma:decompose}
		Given a semigroup $\mathcal{S}$ with identity and a subset $\mathcal{M}\subseteq\mathcal{S}$, for every $L\in\mathcal{S}$ and $R\in\gen{\mathcal{M}}$ that $L\neq LR$, there exists $M\in\mathcal{M}$ and $R'\in\gen{\mathcal{M}}$ such that $LR=LMR'$ and $L\neq LM$.
	\end{lemma}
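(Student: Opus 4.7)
The plan is to argue by looking at a minimal factorization of $R$ into generators and locating the first position at which $L$ gets perturbed. Since $R\in\gen{\mathcal{M}}$ and $L\neq LR$, in particular $R$ cannot be the identity, so we may write $R=M_1M_2\cdots M_k$ with each $M_j\in\mathcal{M}$ and $k\geq 1$.

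Next, I would consider the sequence of partial left-products $L,\, LM_1,\, LM_1M_2,\, \ldots,\, LM_1\cdots M_k = LR$. The first entry is $L$ and the last is $LR\neq L$, so there is a well-defined least index $i\in\{1,\ldots,k\}$ with $LM_1\cdots M_i\neq L$. By minimality of $i$, the product $LM_1\cdots M_{i-1}$ equals $L$ (interpreting the empty product as the identity when $i=1$, which is available since $\mathcal{S}$ has an identity).

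I would then set $M\coloneqq M_i\in\mathcal{M}$ and $R'\coloneqq M_{i+1}\cdots M_k\in\gen{\mathcal{M}}$ (again the empty product is the identity, which lies in $\gen{\mathcal{M}}$ by the convention fixed in the preliminaries). Two quick verifications finish the argument: first, $LM = L\cdot M_i = (LM_1\cdots M_{i-1})M_i = LM_1\cdots M_i\neq L$ by the choice of $i$; second,
\[LMR' = LM_iM_{i+1}\cdots M_k = (LM_1\cdots M_{i-1})M_iM_{i+1}\cdots M_k = LM_1\cdots M_k = LR.\]

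There is essentially no obstacle here: the lemma is a structural observation about how a single generator in a factorization of $R$ must be the one responsible for moving $L$ off of itself. The only point requiring mild care is the boundary case $i=1$, which is handled cleanly by appealing to the identity element of $\mathcal{S}$ to make the empty-product conventions rigorous.
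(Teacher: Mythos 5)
Your proof is correct and follows essentially the same route as the paper's: factor $R$ into generators, take the least index $i$ at which the partial product $LM_1\cdots M_i$ differs from $L$, and set $M=M_i$, $R'=M_{i+1}\cdots M_k$. The only difference is that you spell out the empty-product conventions at the boundary cases, which the paper leaves implicit.
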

	\begin{proof}
		Suppose $R=M_1\cdots M_k$ where $M_1,\ldots,M_k\in\gen{\mathcal{M}}$. Since $L\neq LM_1\cdots M_k$, there exists a smallest $i\in\{1,\ldots,k\}$ such that $L\neq LM_1\cdots M_i$. That means
		\[L=LM_1\cdots M_{i-1}\neq LM_1\cdots M_i=LM_i.\]
		Let $M=M_i$ and $R'=M_{i+1}\cdots M_k$, then $LR=LM_i\cdots M_k=LMR'$ and $L\neq LM$.
	\end{proof}
	
	\begin{lemma}\label{lemma:sequence}
		Given a first-countable, compact and convergent semigroup $\mathcal{S}$ with identity, and a finite subset $\mathcal{M}\subset\mathcal{S}$ with convergence core $\mathcal{A}$, for every $D\in\clos\gen{\mathcal{M}}$ there exists an $\omega^\omega$-index sequence of pairs $((L_\alpha,(R_{\alpha,i})_{i\in\nn}))_{\alpha<\omega^\omega}$, where $L_\alpha\in\gen{\mathcal{M}\cup\mathcal{A}}$ and $R_{\alpha,i}\in\gen{\mathcal{M}}$, such that:
		\begin{enumerate}
			\item For all $\alpha<\omega^\omega$, $\limfty{i}R_{\alpha,i}$ exists in $\mathcal{S}$, and $L_\alpha\cdot\limfty{i}R_{\alpha,i}=D$.
			\item For all $0<\alpha<\omega^\omega$, either the following two properties hold:
			\begin{itemize}
				\item[-] There exists $S\in\gen{\mathcal{M}\cup\mathcal{A}}$ such that $L_{\alpha}=L_{\rho(\alpha)}S$, and $L_\alpha\neq L_{\rho(\alpha)}$.
				\item[-] If $\ell(\alpha)>0$, then $L_\alpha=\limfty{i}L_{\rho(\alpha)+\omega^{\ell(\alpha)-1}\cdot i}$.
			\end{itemize}
			Or there exists $\beta<\alpha$ such that $L_\beta=D$.
		\end{enumerate}

	\end{lemma}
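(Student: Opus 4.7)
My proof plan is a transfinite induction on $\alpha<\omega^\omega$, also maintaining the invariant $L_\alpha\in\gen{\mathcal{M}\cup\mathcal{A}}$. At the base case $\alpha=0$, first-countability of $\mathcal{S}$ and $D\in\clos\gen{\mathcal{M}}$ yield a sequence $(T_i)_{i\in\nn}\subset\gen{\mathcal{M}}$ with $T_i\to D$; I set $L_0=\epsilon$ and $R_{0,i}=T_i$. If at any earlier stage $L_\beta=D$ has already been achieved, I simply extend by $L_\gamma=D$ and $R_{\gamma,i}=\epsilon$ for all $\gamma>\beta$, which satisfies property~1 trivially and falls under the alternative clause of property~2. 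In the remainder of the construction I assume $L_\beta\neq D$ for every $\beta$ constructed so far.

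For the successor step from $\alpha$ to $\alpha+1$, continuity of multiplication together with property~1 gives $L_\alpha R_{\alpha,i}\to D\neq L_\alpha$, so $L_\alpha R_{\alpha,i}\neq L_\alpha$ for infinitely many $i$. For each such $i$, \cref{lemma:decompose} factors $L_\alpha R_{\alpha,i}=L_\alpha M_i R'_{\alpha,i}$ with $M_i\in\mathcal{M}$ and $L_\alpha M_i\neq L_\alpha$. Since $\mathcal{M}$ is finite, pigeonhole extracts an $M\in\mathcal{M}$ occurring as $M_i$ for infinitely many $i$; restricting to those indices and passing (by sequential compactness of the compact first-countable space $\mathcal{S}$) to a further subsequence along which the corresponding $R'_{\alpha,i}$ converge, I set $L_{\alpha+1}=L_\alpha M$ and take $(R_{\alpha+1,i})_i$ to be the extracted subsequence. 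Continuity of multiplication delivers property~1 at $\alpha+1$, and property~2 holds with $S=M\in\mathcal{M}\subseteq\gen{\mathcal{M}\cup\mathcal{A}}$.

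The limit step with $\ell(\alpha)\geq 1$ is the crux. Set $\alpha_i=\rho(\alpha)+\omega^{\ell(\alpha)-1}\cdot i$. Alongside the main induction I prove the stronger auxiliary claim that for every ordinal $\beta<\omega^\omega$, $L_\beta=L_{\rho(\beta)}\cdot Z_\beta$ for some \emph{finite} product $Z_\beta$ of elements of $\mathcal{M}\cup\mathcal{A}$. For successor $\beta$ this is clear with $Z_\beta=M\in\mathcal{M}$. For limit $\beta$, applying this claim inductively at the lower ordinals $\beta_{i+1}<\beta$ yields $L_{\beta_{i+1}}=L_{\beta_i}\cdot Z^{(i)}$ with each $Z^{(i)}$ a finite product in $\mathcal{M}\cup\mathcal{A}$; concatenating these finite products produces a single infinite sequence $(W_k)_{k\in\nn}$ in $\mathcal{M}\cup\mathcal{A}$ whose partial products, left-multiplied by $L_{\beta_0}$, contain the $L_{\beta_i}$'s as a subsequence. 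The convergent property of $\mathcal{S}$ forces $\prod_k W_k$ to exist, the convergence core property collapses some tail of $\prod_k W_k$ into a single element of $\mathcal{A}$, and this expresses $L_\beta=L_{\rho(\beta)}\cdot Z_\beta$ with $Z_\beta$ finite; in particular $L_\alpha=\limfty{i}L_{\alpha_i}\in\gen{\mathcal{M}\cup\mathcal{A}}$ exists, delivering the limit-ordinal bullet of property~2. The inequality $L_\alpha\neq L_{\rho(\alpha)}$ needed for the other bullet follows from \cref{lemma:monotone}: the tail $T=\prod_{k>N_i}W_k$ satisfies $L_{\alpha_i}\cdot T=L_\alpha$, so if $L_\alpha=L_{\rho(\alpha)}$ then the inductively ensured relation $L_{\alpha_0}\cdot Z^{(0)}\cdots Z^{(i-1)}=L_{\alpha_i}$ combined with $L_{\alpha_i}\cdot T=L_{\alpha_0}$ would force $L_{\alpha_0}=L_{\alpha_i}$, contradicting the inductive inequality. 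Finally, to construct $R_{\alpha,i}$: a countable neighborhood base $(U_k)_k$ at $D$ lets me choose $j_i$ with $L_{\alpha_i}R_{\alpha_i,j_i}\in U_i$, so $L_{\alpha_i}R_{\alpha_i,j_i}\to D$; sequential compactness extracts a subsequence along which $R_{\alpha_i,j_i}$ itself converges to some $R^*$, and continuity gives $L_\alpha\cdot R^*=D$.

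The principal obstacle is the auxiliary claim at the limit step: the whole proof hinges on showing that each one-block growth $L_{\alpha_i}\to L_{\alpha_{i+1}}$ resolves to a \emph{finite} product in $\mathcal{M}\cup\mathcal{A}$, rather than merely an element of $\gen{\mathcal{M}\cup\mathcal{A}}$, so that the telescoped product is an honest infinite product over $\mathcal{M}\cup\mathcal{A}$ to which both the convergent property (to produce a limit) and the convergence core property (to place the limit back inside $\gen{\mathcal{M}\cup\mathcal{A}}$) can be applied. The finiteness is purchased one level at a time by the convergence core, and this is precisely why the transfinite index must be confined to $\omega^\omega$: there $\ell(\alpha)\in\nn$, so the recursive unfolding of each block always bottoms out in finitely many layers.
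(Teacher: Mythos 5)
Your proposal is correct and follows essentially the same route as the paper's proof: identity base case, the \cref{lemma:decompose}-plus-pigeonhole-plus-compactness successor step, and at limit ordinals the telescoping of the one-block increments into an infinite product whose tail the convergence core collapses into a single element of $\mathcal{A}$, with \cref{lemma:monotone} (the paper uses its \cref{corollary:monotone}) giving $L_\alpha\neq L_{\rho(\alpha)}$ and a diagonal/cluster-point argument producing $(R_{\alpha,i})_{i\in\nn}$. The only cosmetic difference is that you spell out explicitly the expansion of each $S_j\in\gen{\mathcal{M}\cup\mathcal{A}}$ into a finite word over $\mathcal{M}\cup\mathcal{A}$ before applying the convergence core (your "auxiliary claim" is just the content of property 2), a step the paper performs implicitly.
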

	
	\begin{proof}
		The proof is by building up the sequence with transfinite induction on $\alpha<\omega^\omega$:
		\begin{itemize}
			\item For $\alpha=0$, let $L_0$ be the identity, and $(R_{0,i})_{i\in\nn}$ be the sequence in $\gen{\mathcal{M}}$ whose limit is $D$.
			\item For each $\alpha<\omega^\omega$, suppose $(L_\alpha,(R_{\alpha,i})_{i\in\nn})$ is already constructed. If $L_\alpha=D$ then let $L_{\alpha+1}=L_\alpha$ and $R_{\alpha+1,i}=R_{\alpha,i}$ for all $i\in\nn$. Otherwise construct $(L_{\alpha+1},(R_{\alpha+1,i})_{i\in\nn})$ as follows:
			\begin{enumerate}
				\item Since $L_\alpha\neq D$, there are infinitely many $i\in\nn$ such that $L_\alpha\neq L_\alpha R_{\alpha,i}$. For those $i$, by \cref{lemma:decompose} there exists $M_i\in\mathcal{M}$ and $R'_{\alpha,i}\in\gen{\mathcal{M}}$ such that $L_\alpha R_{\alpha,i}=L_\alpha M_iR'_{\alpha,i}$ and $L_\alpha\neq L_\alpha M_i$. Since $\mathcal{M}$ is finite, there exists $M\in\mathcal{M}$ such that the set 
				\[I=\{i\in\nn:L_\alpha\neq L_\alpha R_{\alpha,i}\textrm{ and }M_i=M\}\]
				is infinite. Let $L_{\alpha+1}=L_\alpha M$, so we have $L_{\alpha+1}\neq L_\alpha$.
				
				\item Take $(R_{\alpha+1,i})_{i\in\nn}$ as a converging subsequence of $(R'_{\alpha,i})_{i\in I}$, thanks to the compactness of $\mathcal{S}$. Since $(L_{\alpha+1}R_{\alpha+1,i})_{i\in\nn}$ is a subsequence of $(L_\alpha M_iR'_{\alpha,i})_{i\in I}=(L_\alpha R_{\alpha,i})_{i\in I}$, which is further a subseqeunce of $(L_\alpha R_{\alpha,i})_{i\in\nn}$, we have 
				\[
					L_{\alpha+1}\cdot\limfty{i}R_{\alpha+1,i}
					=\limfty{i}L_{\alpha+1}R_{\alpha+1,i}
					=\limfty{i}L_\alpha R_{\alpha,i}=L_\alpha\cdot\limfty{i} R_{\alpha,i}=D.
				\]
			\end{enumerate}
			
			\item For each nonzero limit oridinal $\alpha<\omega^\omega$, suppose $(L_\beta,(R_{\beta,i})_{i\in\nn})$ is already constructed for every $\beta<\alpha$. If there exists $\beta<\alpha$ that $L_\beta=D$ then let $L_\alpha=L_\beta$ and $R_\alpha,i=R_\beta,i$ for all $i\in\nn$. Otherwise construct $(L_{\alpha},(R_{\alpha,i})_{i\in\nn})$ as follows:
			\begin{enumerate}
				\item For each $j\in\nn$, let $\alpha_j=\rho(\alpha)+\omega^{\ell(\alpha)-1}\cdot j$, so $\rho(\alpha)=\alpha_0$ and $\rho(\alpha_{j+1})=\alpha_j<\alpha$. That means for every $j\in\nn$ there exists $S_j\in\gen{\mathcal{M}\cup\mathcal{A}}$ such that $L_{\alpha_{j+1}}=L_{\alpha_j}S_j$, and $L_{\alpha_{j+1}}\neq L_{\alpha_j}$. Therefore
				\[\limfty{j}L_{\alpha_j}=L_{\rho(\alpha)}\cdot\prod_{j\in\nn}S_j=L_{\rho(\alpha)}S_0\cdots S_{k-1}S_k'A\]
				for some $k\geq 1$, $S_k'\in\gen{\mathcal{M}\cup\mathcal{A}}$ and $A\in\mathcal{A}$. Let $L_\alpha$ be the limit $\limfty{j}L_{\alpha_j}$. Since $L_{\rho(\alpha)}S_0=L_{\alpha_1}\neq L_{\rho(\alpha)}$, by \cref{corollary:monotone} we have $L_\alpha\neq L_{\rho(\alpha)}$.
				\item Let $R$ be a cluster point of the sequence $(\limfty{i}R_{\alpha_j,i})_{j\in\nn}$, whose existence is guaranteed by compactness, and thus
				\[L_\alpha R=\limfty{j}L_{\alpha_j}R
				 =\limfty{j}\left(L_{\alpha_j}\cdot\limfty{i}R_{\alpha_j,i}\right)=D.\]
				Now that $R$ is in $\clos\{R_{\alpha_j,i}:i,j\in\nn\}$, simply take $(R_{\alpha,i})_{i\in\nn}$ as a sequence in $\{R_{\alpha_j,i}:i,j\in\nn\}$ that converges to $R$, so that $L_\alpha\cdot\limfty{i}R_{\alpha,i}=D$ holds. \qedhere
			\end{enumerate}
			
		\end{itemize}

	\end{proof}

	\begin{theorem}
		In a first-countable, compact and convergent semigroup $\mathcal{S}$ with identity, if a finite subset $\mathcal{M}\subset\mathcal{S}$ has a finite height convergence core $\mathcal{A}$, then $\clos\gen{\mathcal{M}}\subseteq\gen{\mathcal{M}\cup \mathcal{A}}$.
	\end{theorem}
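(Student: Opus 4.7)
The plan is to apply \cref{lemma:sequence} with the given $\mathcal{M}$ and $\mathcal{A}$. For any $D\in\clos\gen{\mathcal{M}}$, the lemma provides a transfinite sequence $((L_\alpha,(R_{\alpha,i})_{i\in\nn}))_{\alpha<\omega^\omega}$ with $L_\alpha\in\gen{\mathcal{M}\cup\mathcal{A}}$. Since $L_\alpha=D$ for even a single ordinal $\alpha<\omega^\omega$ immediately places $D$ in $\gen{\mathcal{M}\cup\mathcal{A}}$, it suffices to prove that some such $\alpha$ exists.

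I would argue by contradiction, assuming $L_\alpha\neq D$ throughout $\alpha<\omega^\omega$. Under this assumption the first case of property~2 in the lemma applies at every nonzero $\alpha$, giving $L_\alpha=L_{\rho(\alpha)}S$ with $L_\alpha\neq L_{\rho(\alpha)}$ for some $S\in\gen{\mathcal{M}\cup\mathcal{A}}$; taking $(\epsilon,S)$ as the pair in \cref{definition:po} yields $L_{\rho(\alpha)}\lhd L_\alpha$. In particular, along the spine $\alpha_k=\omega\cdot k$ we have $\rho(\alpha_{k+1})=\alpha_k$, so $L_\omega\lhd L_{\omega\cdot 2}\lhd L_{\omega\cdot 3}\lhd\cdots$ is an infinite strictly $\lhd$-increasing chain in $\gen{\mathcal{M}\cup\mathcal{A}}$. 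Tracing through the proof of the lemma, at each such limit step the factor $S$ decomposes, via the convergence-core property, as a finite product $N_k\in\gen{\mathcal{M}}$ followed by some $A_k\in\mathcal{A}$, so that each $L_{\omega\cdot k}$ naturally carries a rightmost $\mathcal{A}$-tail.

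The heart of the proof is then to promote this chain in $\gen{\mathcal{M}\cup\mathcal{A}}$ to a chain in $\mathcal{A}$ of length exceeding the height $h$ of $\mathcal{A}$, producing the desired contradiction. The natural plan is to ascend the Cantor normal form: $L_{\omega^2}$ is itself the limit of the $L_{\omega\cdot k}$'s, and the convergence core reduces the infinite product of blocks $N_kA_k$ to a single $\mathcal{A}$-element $A^{(2)}$; iterating through $\omega^3,\ldots,\omega^{h+1}$ (all below $\omega^\omega$) should yield $h+1$ such tails $A^{(1)},A^{(2)},\ldots,A^{(h+1)}$, and one then argues that these form a strict $\lhd$-chain in $\mathcal{A}$. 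The main obstacle is exactly this last step: the strict inequality $L_{\rho(\alpha)}\neq L_\alpha$ granted by the lemma does not automatically restrict to $A^{(k)}\neq A^{(k)}T$ for a witness suffix $T$, and the rightmost $\mathcal{A}$-tail is not a priori canonical. I expect to handle this by combining the absorbing property (\cref{lemma:absorb}) applied to the nested convergent products that build $A^{(k+1)}$ with the $S^\omega T^\omega$-trick used in the irreflexivity argument for $\lhd$; but the exact indexing and decomposition will require care, and it is conceivable that a more adroit choice of ordinals than the naive $\omega^k$ spine is needed to make the $\mathcal{A}$-chain fall out cleanly.
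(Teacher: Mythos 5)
Your overall architecture matches the paper's: invoke \cref{lemma:sequence}, assume $L_\alpha\neq D$ for all $\alpha<\omega^\omega$, and derive a contradiction with the finite height of $\mathcal{A}$ by extracting a long $\lhd$-chain inside $\mathcal{A}$ from the tails produced at limit stages. But the step you flag as "the main obstacle" is in fact the entire content of the proof, and you have not supplied it: an infinite $\lhd$-chain among the $L_{\omega\cdot k}$ in $\gen{\mathcal{M}\cup\mathcal{A}}$ contradicts nothing, since the height hypothesis is only about $\mathcal{A}$, and you give no argument that the tails $A^{(1)},\ldots,A^{(h+1)}$ are strictly $\lhd$-comparable. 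As written, the proposal is a correct plan with the decisive lemma missing.

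Here is how the paper closes exactly that gap, and where your guess about the mechanism is slightly off. Fix $\alpha$ with $\ell(\alpha)>1$ and set $\alpha_j=\rho(\alpha)+\omega^{\ell(\alpha)-1}\cdot j$, $L_{\alpha_{j+1}}=L_{\alpha_j}S_j$. Since $\ell(\alpha_{j+1})=\ell(\alpha)-1>0$, each $S_j$ already ends in an $\mathcal{A}$-element: $S_j=T_jA_{\alpha_{j+1}}$. The convergence core gives $k$ with $A_\alpha=S_k''\cdot\prod_{j>k}S_j=\bigl(S_k''T_{k+1}\bigr)A_{\alpha_{k+2}}\bigl(\prod_{j>k+1}S_j\bigr)$, which exhibits $A_{\alpha_{k+2}}$ as a middle factor of $A_\alpha$. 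The strictness required by \cref{definition:po} is $A_{\alpha_{k+2}}\neq A_{\alpha_{k+2}}\cdot\prod_{j>k+1}S_j$; if it failed, then $\prod_{j>k}S_j=T_{k+1}A_{\alpha_{k+2}}\cdot\prod_{j>k+1}S_j=S_{k+1}$, and \cref{corollary:monotone} (not \cref{lemma:absorb} or the $S^\omega T^\omega$ trick) forces $S_{k+1}S_{k+2}=S_{k+1}$, hence $L_{\alpha_{k+3}}=L_{\alpha_{k+2}}$, contradicting the strictness $L_{\alpha_{j+1}}\neq L_{\alpha_j}$ guaranteed by \cref{lemma:sequence}. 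Thus $A_{\alpha_{k+2}}\lhd A_\alpha$ with $\ell(\alpha_{k+2})=\ell(\alpha)-1$, and descending from any $\omega^m$ yields chains in $\mathcal{A}$ of unbounded length. Note also that the "spine" is traversed downward in the exponent of the Cantor normal form (from $\ell(\alpha)$ to $\ell(\alpha)-1$ at an index $k+2$ chosen by the convergence core), not along your fixed spine $\omega\cdot k$; the first level $\ell(\alpha)=1$ cannot start the chain because the $S_j$ there need not carry $\mathcal{A}$-tails, which is why the argument only applies for $\ell(\alpha)>1$.
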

	\begin{proof}
		We are going to show that if the convergence core $\mathcal{A}$ has finite height, then for every $D\in \clos\gen{\mathcal{M}}$, in the sequence $((L_\alpha,(R_{\alpha,i})_{i\in\nn}))_{\alpha<\omega^\omega}$ constructed in \cref{lemma:sequence} there exists $\alpha<\omega^\omega$ such that $L_\alpha=D$.
		
		Assume the contrary, that for all $\alpha<\omega^\omega$, $L_\alpha\neq D$. In \cref{lemma:sequence} we showed that for all $0<\alpha<\omega^\omega$ there exists $S\in\gen{\mathcal{M}\cup\mathcal{A}}$ such that $L_{\alpha}=L_{\rho(\alpha)}S$. By the construction in \cref{lemma:sequence}, when $\ell(\alpha)>0$ we can write $S=TA_\alpha$, where $T\in\gen{\mathcal{M}\cup\mathcal{A}}$ and $A_\alpha\in\mathcal{A}$.
		
		Now consider the case when $\ell(\alpha)>1$. With the same definitions of $\alpha_j$ and $S_j$, we can further assume $S_j=T_jA_{\alpha_{j+1}}$ for every $j\in\nn$, since $\ell(\alpha_{j+1})=\ell(\alpha)-1>0$. On the other hand, there exists $k\in\nn$ and some $S_k''\in\gen{\mathcal{M}\cup\mathcal{A}}$ such that
		\[A_\alpha=S_k''\cdot\prod_{j>k}S_j=S_k''T_{k+1}A_{\alpha_{k+2}}\cdot\prod_{j>k+1}S_j.\]
		Moreover, we know $A_{\alpha_{k+2}}\neq A_{\alpha_{k+2}}\cdot\prod_{j>k+1}S_j$, since otherwise it means $\prod_{j>k}S_j=S_{k+1}$, which further implies $S_{k+1}S_{k+2}=S_{k+1}$ by \cref{corollary:monotone}. That leads to $L_{\alpha_{j+2}}=L_{\alpha_{j+1}}$, contradicting to the proven fact in \cref{lemma:sequence}. Therefore with \cref{definition:po} we have $A_{\alpha_{k+2}}\lhd A_\alpha$
		
		The above implies the following: for every $\alpha<\omega^\omega$ such that $\ell(\alpha)>1$, there exists $\beta<\alpha$ with $\ell(\beta)=\ell(\alpha)-1$, such that $A_\beta\lhd A_\alpha$. Thus there exists arbitrarily long chains in $\mathcal{A}$, which contradicts to the fact that $\mathcal{A}$ has finite height.
	\end{proof}
	
	\begin{corollary}\label{corollary:profinite}
		In a first-countable, compact and convergent semigroup $\mathcal{S}$ with identity, if a finite subset $\mathcal{M}\subset\mathcal{S}$ has a finite convergence core $\mathcal{A}$, then $\clos\gen{\mathcal{M}}$ is finitely generated. Furthermore, $\clos\gen{\mathcal{M}}$ is a profinite topological semigroup.
	\end{corollary}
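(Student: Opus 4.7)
The plan is to bootstrap from the immediately preceding theorem, which already supplies $\clos\gen{\mathcal{M}}\subseteq\gen{\mathcal{M}\cup\mathcal{A}}$ (finiteness of $\mathcal{A}$ trivially implies finite height). The missing piece for finite generation is the reverse containment, which can fail a priori because elements of $\mathcal{A}$ are not required by the definition of convergence core to lie in $\clos\gen{\mathcal{M}}$. To remedy this, I would pass to the refined set $\mathcal{A}':=\mathcal{A}\cap\clos\gen{\mathcal{M}}$, which remains finite. The key verification is that $\mathcal{A}'$ is still a convergence core of $\mathcal{M}$: for any infinite product $\prod_{i\in\nn}S_i$ with factors in $\mathcal{M}\cup\mathcal{A}'\subseteq\clos\gen{\mathcal{M}}$, every finite prefix stays in the subsemigroup $\clos\gen{\mathcal{M}}$ by \cref{lemma:closure}, and hence every tail limit stays there by closedness, while the original hypothesis on $\mathcal{A}$ gives some $k$ with $\prod_{i>k}S_i\in\mathcal{A}$; combined, the tail lies in $\mathcal{A}'$. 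Applying the preceding theorem to $\mathcal{A}'$ then gives $\clos\gen{\mathcal{M}}\subseteq\gen{\mathcal{M}\cup\mathcal{A}'}$, and since $\mathcal{M}\cup\mathcal{A}'\subseteq\clos\gen{\mathcal{M}}$ is a subsemigroup, the reverse containment is automatic. The equality $\clos\gen{\mathcal{M}}=\gen{\mathcal{M}\cup\mathcal{A}'}$ exhibits $\clos\gen{\mathcal{M}}$ as finitely generated.

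For the profiniteness claim I would invoke \cref{theorem:profinite}, reducing the task to verifying compactness and total disconnectedness. Compactness is immediate since $\clos\gen{\mathcal{M}}$ is closed in the compact space $\mathcal{S}$. For total disconnectedness, note that $\gen{\mathcal{M}\cup\mathcal{A}'}$, being a finitely generated semigroup, is countable, and therefore so is $\clos\gen{\mathcal{M}}$. The remaining observation is the classical fact that a countable compact Hausdorff space is totally disconnected: if some subspace were connected with two distinct points $a\neq b$, then normality of compact Hausdorff spaces together with Urysohn's lemma would produce a continuous map onto a set containing both $0$ and $1$ in $[0,1]$, whose image of the connected set would be all of $[0,1]$, forcing cardinality at least $2^{\aleph_0}$ and contradicting countability.

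I expect the only genuinely nontrivial step to be the opening one, namely the observation that $\mathcal{A}$ can be trimmed down to $\mathcal{A}\cap\clos\gen{\mathcal{M}}$ without destroying the convergence-core property; once that algebraic equality is secured, the topological half of the statement reduces to textbook facts about compact Hausdorff spaces and the already-cited \cref{theorem:profinite}.
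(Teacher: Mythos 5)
Your proposal is correct and follows essentially the same route as the paper: trim $\mathcal{A}$ to a finite subset of $\clos\gen{\mathcal{M}}$ that is still a convergence core, deduce $\clos\gen{\mathcal{M}}=\gen{\mathcal{M}\cup\mathcal{A}'}$, and then get profiniteness from countability plus compact Hausdorff via \cref{theorem:profinite}. The only cosmetic difference is that the paper takes $\mathcal{A}'$ to be the elements of $\mathcal{A}$ expressible as infinite products in $\mathcal{M}$ rather than $\mathcal{A}\cap\clos\gen{\mathcal{M}}$; your choice works just as well, and your verification of the convergence-core property is in fact spelled out more carefully than the paper's.
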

	\begin{proof}
		Let $\mathcal{A}'$ be the subset of $\mathcal{A}$ which consists of $A\in\mathcal{A}$ that can be expressed as an infinite product in $\mathcal{M}$. On one hand, $\mathcal{A}'$ is still a convergence core of $\mathcal{M}$, and it is finite and thus finite height, so $\clos\gen{\mathcal{M}}\subseteq\gen{\mathcal{M}\cup \mathcal{A}'}$. On the other hand, since $\mathcal{A}'\subseteq\clos\gen{\mathcal{M}}$ we have $\gen{\mathcal{M}\cup \mathcal{A}'}\subseteq\clos\gen{\mathcal{M}}$, so we actually have $\clos\gen{\mathcal{M}}=\gen{\mathcal{M}\cup \mathcal{A}'}$. Hence $\clos\gen{\mathcal{M}}$ is finitely generated.
		
		Since $\clos\gen{\mathcal{M}}$ is finitely generated, it is countable. Also it is a closed subspace of the compact Hausdorff space $\mathcal{S}$, so it is compact and Hausdorff itself and thus normal. Therefore $\clos\gen{\mathcal{M}}$ is totally disconnected, and by \cref{theorem:profinite} it is profinite.
	\end{proof}

	\section{Doubly Stochastic Matrices and $\varepsilon$-Domestic Matrices}
	
	\subsection{$\varepsilon$-Domestic Matrices as Convergent Semigroups}\label{section:domestic}

	\begin{definition}
		For any $\varepsilon\in(0,(2n)^{-1}]$, a doubly stochastic matrix $M\in\mathcal{D}_n$ is \textit{$\varepsilon$-domestic}, if there do not exist two subsets $X,Y\subseteq\{1,\ldots,n\}$ such that $X\neq Y$, $|X|=|Y|$, and 
		\[\sum_{i\in X,j\in Y} M[i,j]> (1-\varepsilon)|X|.\]
		The set of all $n\times n$ $\varepsilon$-domestic matrices is denoted as $\mathcal{D}_{n,\varepsilon}$.
	\end{definition}
	\begin{proposition}
		For all $\varepsilon\in(0,(2n)^{-1}]$, $\mathcal{D}_{n,\varepsilon}$ is closed under matrix multiplication, and thus forms a semigroup.
	\end{proposition}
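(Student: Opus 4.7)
Plan. Fix $M,N \in \mathcal{D}_{n,\varepsilon}$ and subsets $X \neq Y$ of $\{1,\ldots,n\}$ with $|X|=|Y|=k$; the case $k=n$ forces $X=Y$ and is vacuous, so assume $k<n$. My goal is to show $\sum_{i\in X, j\in Y}(MN)[i,j] \leq (1-\varepsilon)k$. Expanding the matrix product gives $\sum_l a_l b_l$, where $a_l=\sum_{i\in X} M[i,l]$ and $b_l=\sum_{j\in Y} N[l,j]$. Applying the $\varepsilon$-domesticity of $M$ to the fixed $X$ together with every other $k$-subset places $a$ in the polytope
\[
P_X=\Bigl\{x\in[0,1]^n : \textstyle\sum_l x_l=k,\ \sum_{l\in Z}x_l\leq (1-\varepsilon)k \text{ for every $k$-subset } Z\neq X\Bigr\};
\]
analogously $b\in P_Y$.

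The central step I would prove is a vertex characterization: \emph{when $k<n$, the indicator $\mathbf{1}_X$ is the only vertex $v$ of $P_X$ with $\sum_{l\in X}v_l>(1-\varepsilon)k$}. Given such a $v$, set $s=k-\sum_{l\in X}v_l<\varepsilon k \leq k/(2n)$, which is strictly less than $1/2$ since $k<n$. The marginal identities $\sum_{l\in X}(1-v_l)=s$ and $\sum_{l\notin X}v_l=s$ imply pointwise $v_l>1-\varepsilon k>1/2$ for $l\in X$ and $v_l<\varepsilon k<1/2$ for $l\notin X$, both strict. For any competing $k$-subset $Z\neq X$ we have $|Z\cap X|\leq k-1$, hence
\[
\sum_{l\in Z}v_l \ \leq\ (k-1)+s \ <\ (k-1)+\varepsilon k \ \leq\ (1-\varepsilon)k,
\]
the last inequality rearranging to $2\varepsilon k\leq 1$, which is precisely where the bound $\varepsilon\leq 1/(2n)$ gets used. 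So no ``competing'' constraint is tight at $v$, and the $n-1$ independent tight inequalities of the vertex (beyond $\sum_l v_l=k$) must all come from the $\{0,1\}$-bounds on coordinates. The strict pointwise dichotomy forbids $v_l=0$ for $l\in X$ and $v_l=1$ for $l\notin X$, so the tight bounds take the form $v_l=1$ on some $T_1\subseteq X$ and $v_l=0$ on some $T_0\subseteq X^c$; a short case analysis on the one possibly free coordinate (which would be forced to $1$ if in $X$ or to $0$ if off $X$, contradicting the strict bounds) pins $v=\mathbf{1}_X$.

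With the lemma in hand, for every vertex $v$ of $P_X$ I would bound $\langle v,b\rangle\leq(1-\varepsilon)k$. If $v=\mathbf{1}_X$, this is $\sum_{l\in X}b_l\leq(1-\varepsilon)k$, which is the $\varepsilon$-domesticity of $N$ applied directly to the pair $(X,Y)$. Otherwise the lemma gives $\sum_{l\in X}v_l\leq(1-\varepsilon)k$, and membership in $P_X$ gives the same bound for every other $k$-subset $W\neq X$; writing $b=\sum_W\mu_W\mathbf{1}_W$ as a convex combination of hypersimplex vertices then yields $\langle v,b\rangle=\sum_W\mu_W\sum_{l\in W}v_l\leq(1-\varepsilon)k$. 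Expressing $a$ itself as a convex combination of vertices of $P_X$ and applying linearity of $\langle\cdot,b\rangle$ closes the argument.

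The main obstacle is the vertex lemma, where the exact hypothesis $\varepsilon\leq(2n)^{-1}$ is genuinely tight: it simultaneously provides the strict pointwise dichotomy $v_l>1/2$ on $X$ vs.\ $v_l<1/2$ off $X$ (needing $\varepsilon k<1/2$) and the slack $2\varepsilon k\leq 1$ required to rule out competing tight constraints. Without both of these, $P_X$ can acquire additional vertices with large $X$-mass and the vertex-by-vertex estimate above breaks down.
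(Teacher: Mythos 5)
Your proof is correct, and it bounds the key quantity by a genuinely different route from the paper. Both arguments begin with the same reduction: writing $a_l=\sum_{i\in X}M[i,l]$ and $b_l=\sum_{j\in Y}N[l,j]$, one has $a,b\in[0,1]^n$ with coordinate sum $k$, the domesticity of $M$ and $N$ becomes a family of linear constraints on $a$ and $b$, and the goal is to bound the bilinear form $\sum_l a_lb_l$. The paper then proceeds by a rearrangement/greedy argument: it takes $Z$ to be the index set of the $k$ largest entries of $a$, splits into the cases $Z=Y$ and $Z\neq Y$, and in each case identifies the extremal configuration of the other factor, invoking domesticity on the pair $(X,Z)$ or $(Z,Y)$. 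You instead package the constraints into the polytope $P_X$, prove that $\mathbf{1}_X$ is its only vertex with $X$-mass above $(1-\varepsilon)k$, and finish by decomposing $a$ over the vertices of $P_X$ and $b$ over the hypersimplex vertices, so that domesticity of $N$ is only ever invoked on the single pair $(X,Y)$. Your version is longer but cleaner to verify: it replaces the paper's ``the sum is maximized when\ldots'' reasoning (which takes some care to make fully rigorous) with an extreme-point argument, and it isolates exactly the two places where $\varepsilon\le(2n)^{-1}$ enters --- the $1/2$-dichotomy of vertex coordinates and the slack $2\varepsilon k\le 1$ that rules out tight competing constraints. One small wording issue: at the end of your vertex lemma, the parenthetical ``contradicting the strict bounds'' is off --- the one possibly free coordinate is simply forced by $\sum_l v_l=k$ to equal $1$ if it lies in $X$ and $0$ otherwise, which directly gives $v=\mathbf{1}_X$; no contradiction is involved. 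This does not affect correctness.
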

	\begin{proof}
		For every $A,B\in \mathcal{D}_{n,\varepsilon}$ and $X,Y\subseteq\{1,\ldots,n\}$ such that $X\neq Y$ and $|X|=|Y|$, we have
		\begin{align}\label{eq:ip}
		\sum_{i\in X,j\in Y} (AB)[i,j]
		&=\sum_{i\in X,j\in Y}\sum_{k=1}^n A[i,k]B[k,j] \nonumber\\
		&=\sum_{k=1}^n\left(\sum_{i\in X} A[i,k]\right)\left(\sum_{j\in Y} A[k,j]\right)
		\eqqcolon\sum_{k=1}^n A[X,k]B[k,Y]
		\end{align}
		where we use $A[X,k]$ and $B[k,Y]$ to denote $\sum_{i\in X} A[i,k]$ and $\sum_{j\in Y} A[k,j]$ respectively. Notice that we have
		\[\sum_{k=1}^n A[X,k]=\sum_{k=1}^n B[k,Y]=|X|,\quad 0\leq A[X,k],B[k,Y]\leq 1,\ \forall k=1,\ldots,n.\]
		Now suppose $Z\subseteq\{1,\ldots,n\}$ consists of the indices of the largest $|X|$ elements in $(A[X,k])_{k=1,\ldots,n}$. There are two possibilities:
		\begin{itemize}
			\item If $Z=Y$, then \cref{eq:ip} is maximized when $B[k,Y]=1$ for all $k\in Y$, thus
			\[\sum_{i\in X,j\in Y} (AB)[i,j]\leq \sum_{k\in Y}A[X,k]=\sum_{i\in X,k\in Y}A[i,k]\leq(1-\varepsilon)|X|.\]
			\item If $Z\neq Y$, then \cref{eq:ip} is maximized when $B[k,Z]=1$ for all but one $k\in Z$, which is denoted by $k_0$. It corresponds to the smallest $A[X,k]$ for $k\in Z$, and $B[k_0,Z]=1-\varepsilon|X|$ since
			\[\sum_{k\in Z}B[k,Y]=\sum_{k\in Z,j\in Y}B[k,j]\leq(1-\varepsilon)|X|.\]
			Therefore for $k\notin Z$, we have $B[k,Z]\leq \varepsilon|X|\leq B[k_0,Z]$, as $\varepsilon\leq (2n)^{-1}$. Thus $Z$ is also the indices of the largest $|X|$ elements in $(B[k,Y])_{k=1,\ldots,n}$, and \cref{eq:ip} is maximized (regardless of the constraint on $\sum_{k\in Z}A[X,k]$) when $A[X,k]=1$ for all $k\in Z$, so
			\[\sum_{i\in X,j\in Y} (AB)[i,j]\leq \sum_{k\in Z}B[k,Y]\leq(1-\varepsilon)|X|.\qedhere\]
		\end{itemize}
	\end{proof}

	In order to show that $\mathcal{D}_{n,\varepsilon}$ is actually convergent, we need the following lemmas:
	
	\begin{lemma}\label{lemma:domestic}
		For all $p,q\in\triangle_n$ and $M\in\mathcal{D}_{n,\varepsilon}$ such that $q=Mp$, $|p-q|_\infty\leq 2n\varepsilon^{-1}|p\sort-q\sort|_\infty$.
	\end{lemma}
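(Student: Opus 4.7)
The plan begins by reducing, without loss of generality, to the case $p = p\sort$, since conjugating $M$ by a permutation matrix preserves $\varepsilon$-domesticity as well as both $|p-q|_\infty$ and $|p\sort-q\sort|_\infty$. So assume $p[1]\geq\cdots\geq p[n]$, set $\delta = |p\sort - q\sort|_\infty$, and fix a permutation $\pi$ with $q\sort[k] = q[\pi(k)]$ for all $k$, so that $q[i] = q\sort[\pi^{-1}(i)]$.

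The heart of the argument is a ``no-crossing'' claim: whenever $p[k] - p[k+1] > \delta/\varepsilon$, we have $\{\pi(1),\ldots,\pi(k)\} = \{1,\ldots,k\}$. To prove it, set $X_k = \{1,\ldots,k\}$, $Y_k = \{\pi(1),\ldots,\pi(k)\}$, $T_c = \sum_{j\in Y_k} M[j,c]$, and $S = \sum_{c\in X_k} T_c$. On the one hand, using $q\sort[i] \geq p[i] - \delta$,
\[\sum_{j\in Y_k} q[j] = \sum_{i=1}^k q\sort[i] \geq \sum_{i=1}^k p[i] - k\delta.\]
On the other hand, $\sum_{j\in Y_k} q[j] = \sum_c T_c p[c]$, and since the columns of $M$ sum to $1$ we get $\sum_{c\leq k}(1-T_c) = k - S = \sum_{c>k} T_c$. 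Using monotonicity of $p$, namely $p[c]\geq p[k]$ on $X_k$ and $p[c]\leq p[k+1]$ on $X_k^c$, one obtains the upper bound
\[\sum_c T_c p[c] \leq \sum_{i=1}^k p[i] - (p[k]-p[k+1])(k-S).\]
Comparing the two estimates gives $(p[k]-p[k+1])(k-S) \leq k\delta$, so $k - S < \varepsilon k$, i.e.\ $S > (1-\varepsilon)k$. Since $|X_k| = |Y_k| = k$, the $\varepsilon$-domestic hypothesis applied to rows $Y_k$ and columns $X_k$ forces $Y_k = X_k$.

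With the no-crossing claim established, $\pi$ preserves the partition of $\{1,\ldots,n\}$ into maximal blocks between positions with $p[k]-p[k+1] > \delta/\varepsilon$. Inside any such block, consecutive drops of $p$ are at most $\delta/\varepsilon$, so for every $i$ in the block, $|p[i] - p[\pi^{-1}(i)]| \leq (n-1)\delta/\varepsilon$. Therefore
\[|p[i] - q[i]| = |p[i] - q\sort[\pi^{-1}(i)]| \leq |p[i] - p[\pi^{-1}(i)]| + |p[\pi^{-1}(i)] - q\sort[\pi^{-1}(i)]| \leq (n-1)\delta/\varepsilon + \delta,\]
which is at most $2n\delta/\varepsilon$ since $\varepsilon \leq 1/(2n) \leq 1$.

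The main obstacle I anticipate is discovering the two-sided estimate for $\sum_{j\in Y_k} q[j]$ that isolates exactly the drop $p[k] - p[k+1]$: the trick is to use the identity $\sum_{c\leq k}(1-T_c) = \sum_{c>k} T_c = k - S$, coming from doubly stochastic column sums, to fuse the deficiency on $X_k$ and the excess on $X_k^c$ into a single quantity that couples cleanly with the monotonicity of $p$. Once this is in place, invoking $\varepsilon$-domesticity and a block-by-block bookkeeping yields the claim.
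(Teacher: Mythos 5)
Your proof is correct and follows essentially the same route as the paper's: identify the positions $k$ where $p\sort[k]-p\sort[k+1]>\varepsilon^{-1}\delta$, use $\varepsilon$-domesticity to show that the top-$k$ index sets of $p$ and $q$ must coincide at such positions, and then bound $|p[i]-q[i]|$ block by block. Your derivation of the key inequality via the identity $\sum_{c\leq k}(1-T_c)=\sum_{c>k}T_c$ is a cleaner, fully rigorous version of the paper's extremal (``maximized when\ldots'') argument, and the initial reduction to sorted $p$ by conjugation is a harmless simplification that replaces the paper's two permutations $A,B$ with a single $\pi$.
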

	\begin{proof}		
		Suppose $|p\sort-q\sort|_\infty=\delta$, and let $A,B\in S_n$ be the permutations such that $q=Aq\sort$ and $p=Bp\sort$. Let
		\[K=\{k\in\{1,\ldots,n-1\}:p\sort[k]-p\sort[k+1]>\varepsilon^{-1}\delta\}\cup\{0,n\},\]
		We prove in below that for each $k\in K$, $A\{1,\ldots,k\}=B\{1,\ldots,k\}$.
		
		If $k=0$ or $k=n$, the claim is trivially true. For $k\in K$ that $0<k<n$, assume the contrary that $A$ and $B$ send $\{1,\ldots,k\}$ to different sets $X$ and $Y$. Then
		\begin{equation}\label{eq:mix}
			\sum_{i\in X}q[i]=\sum_{i\in X}\sum_{j=1}^n M[i,j]p[j] 
			\eqqcolon \sum_{j=1}^n M[X,j]p[j] 
		\end{equation}
		where we use $M[X,j]$ to denote $\sum_{i\in X} M[i,j]$. Notice that $Y$ consists of the indices of the largest $k$ elements in $p$, while we have $\sum_{j\in Y} M[X,j]\leq (1-\varepsilon)k$, and $0\leq M[X,j]\leq 1$ since $M$ is $\varepsilon$-domestic. Therefore \cref{eq:mix} is maximized when $M[X,j]=1$ for all but one $j\in Y$, which corresponds to the smallest $p[j]$ for $j\in Y$, and there we have $M[X,j]=1-\varepsilon k$. Thus
		\[
			\sum_{i\in X}q[i]=\sum_{j=1}^n M[X,j]p[j]\leq \sum_{j\in Y}p[j]-\varepsilon k p\sort[k]+\varepsilon k p\sort[k+1]<\sum_{j\in Y}p[j]-k\delta.
		\]
		On the other hand,
		\[\sum_{i\in X}q[i]=\sum_{i=1}^k q\sort[i]\geq \sum_{i=1}^k (p\sort[i]-\delta)=\sum_{j\in Y}p[j]-k\delta,\]
		which leads to a contradiction.
		
		Now for each $i\in\{1,\ldots,n\}$, let $k=A^{-1}i$, and let $k_1,k_2\in K$ be the closest pair in $K$ such that $k_1<k\leq k_2$. The result above implies that $A\{k_1+1,\ldots,k_2\}=B\{k_1+1,\ldots,k_2\}$, so $k'=B^{-1}i$ must also satisfy $k_1<k'\leq k_2$. Therefore
		\[|p[i]-q[i]|=|p\sort[k]-q\sort[k']|\leq|p\sort[k_1+1]-p\sort[k_2]|+|p\sort-q\sort|_\infty\leq n\varepsilon^{-1}\delta+|p\sort-q\sort|_\infty.\]
		As $\varepsilon^{-1}\geq 2n$, this concludes $|p-q|_\infty\leq 2n\varepsilon^{-1}|p\sort-q\sort|_\infty$
	\end{proof}

	\begin{lemma}\label{lemma:intermediate}
		For all $p,q,r\in\triangle_n$ such that $p \succ q \succ r$, $|p\sort-q\sort|_\infty\leq 2n|p\sort-r\sort|_\infty$.
	\end{lemma}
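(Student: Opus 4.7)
The plan is to prove the slightly stronger bound $|p\sort - q\sort|_\infty \leq n|p\sort - r\sort|_\infty$ by reducing entrywise differences to differences of cumulative sums, since majorization gives direct sign information on partial sums. Set $\delta = |p\sort - r\sort|_\infty$ and, for $k = 0, 1, \ldots, n$, define $P_k = \sum_{i=1}^k p\sort[i]$, $Q_k = \sum_{i=1}^k q\sort[i]$, and $R_k = \sum_{i=1}^k r\sort[i]$, with $P_0 = Q_0 = R_0 = 0$. The first step is simply to rewrite the two majorization hypotheses $p\succ q\succ r$ as the chain of inequalities $R_k \leq Q_k \leq P_k$ for every $k$, with equality at the endpoints $k=0$ and $k=n$.

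Second, I would bound the gap between $P_k$ and $R_k$. Since $|p\sort[i] - r\sort[i]| \leq \delta$ for each $i$ and $P_k - R_k \geq 0$ by majorization, the triangle inequality gives $0 \leq P_k - R_k \leq k\delta \leq n\delta$. Combining this with the sandwich $R_k \leq Q_k \leq P_k$ yields the uniform bound $0 \leq P_k - Q_k \leq n\delta$ for every $k \in \{0, \ldots, n\}$.

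Third, apply the telescoping identity $p\sort[k] - q\sort[k] = (P_k - Q_k) - (P_{k-1} - Q_{k-1})$. Both quantities on the right-hand side lie in the interval $[0, n\delta]$, so their difference lies in $[-n\delta, n\delta]$. This immediately gives $|p\sort[k] - q\sort[k]| \leq n\delta$ for all $k$, and therefore $|p\sort - q\sort|_\infty \leq n\delta \leq 2n\delta$, as required.

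I do not anticipate any real obstacle: once the problem is re-expressed in terms of partial sums, majorization becomes a literal chain of inequalities between real numbers, and the argument collapses to two elementary interval manipulations. The factor $2n$ stated in the lemma (rather than the $n$ the above argument would yield) presumably leaves slack matching the form in which the lemma is consumed by \cref{lemma:domestic} downstream.
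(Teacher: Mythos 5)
Your proof is correct and follows essentially the same route as the paper: both rewrite majorization as the chain of partial-sum inequalities $R_k\leq Q_k\leq P_k$, bound $P_k-Q_k$ via $P_k-R_k\leq k\delta$, and recover the entrywise difference from the telescoping identity $p\sort[k]-q\sort[k]=(P_k-Q_k)-(P_{k-1}-Q_{k-1})$. The only difference is cosmetic: you exploit the nonnegativity of $P_k-Q_k$ to get the sharper constant $n$, whereas the paper applies the triangle inequality to obtain $(2k-1)\delta\leq 2n\delta$.
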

	\begin{proof}
		By the equivalent definition of majorization, we have
		\[\sum_{i=1}^k p\sort[i]\geq \sum_{i=1}^k q\sort[i]\geq \sum_{i=1}^k r\sort[i]\geq \sum_{i=1}^k p\sort[i]-k|p\sort-r\sort|_\infty\]
		for all $k\in\{1,\ldots,n\}$. Therefore for each $k\in\{1,\ldots,n\}$,
		\[|p\sort[k]-q\sort[k]|\leq
		\left|\sum_{i=1}^{k-1} p\sort[i]-\sum_{i=1}^{k-1} q\sort[i]\right|+
		\left|\sum_{i=1}^k p\sort[i]-\sum_{i=1}^k q\sort[i]\right|
		\leq (2k-1)|p\sort-r\sort|_\infty\leq 2n|p\sort-r\sort|_\infty.\qedhere\]
	\end{proof}
	
	\begin{theorem}\label{theorem:domestic}
		For all $\varepsilon\in(0,(2n)^{-1}]$, $\mathcal{D}_{n,\varepsilon}$ is a compact and convergent semigroup.
	\end{theorem}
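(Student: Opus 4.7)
Both parts of the statement reduce to an analysis of the partial products $P_i\coloneqq M_0M_1\cdots M_i$ via majorization, once one observes a useful transposition symmetry.

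For compactness, I would note that $\mathcal{D}_n$ is already a compact subset of $\mathbb{R}^{n\times n}$, and that $\mathcal{D}_{n,\varepsilon}$ is cut out of $\mathcal{D}_n$ by the finitely many closed inequalities $\sum_{i\in X,\,j\in Y} M[i,j]\leq(1-\varepsilon)|X|$ over pairs $X\neq Y\subseteq\{1,\ldots,n\}$ of equal size. Hence $\mathcal{D}_{n,\varepsilon}$ is closed in $\mathcal{D}_n$ and therefore compact.

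For convergence, given any sequence $(M_i)_{i\in\nn}\subset\mathcal{D}_{n,\varepsilon}$, the goal is to prove that $P_i$ converges in $\mathcal{D}_{n,\varepsilon}$. First I would observe that $\mathcal{D}_{n,\varepsilon}$ is closed under transposition, since swapping $X\leftrightarrow Y$ is a bijection on the constraint pairs. Now fix $k\in\{1,\ldots,n\}$ and let $r^{(k)}_i$ denote the $k$-th row of $P_i$, viewed as a column vector. From $P_i=P_{i-1}M_i$ one gets $r^{(k)}_i=M_i\ct r^{(k)}_{i-1}$, and iterating, $r^{(k)}_i=Q_{j,i}\,r^{(k)}_j$ where $Q_{j,i}\coloneqq M_i\ct\cdots M_{j+1}\ct\in\mathcal{D}_{n,\varepsilon}$ whenever $i>j$, using that $\mathcal{D}_{n,\varepsilon}$ is a semigroup closed under transposition.

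Because each $M_i\ct$ is doubly stochastic, we have the majorization chain $r^{(k)}_0\succ r^{(k)}_1\succ r^{(k)}_2\succ\cdots$. Consequently, for every $\ell\in\{1,\ldots,n\}$ the partial sum $\sum_{m=1}^\ell r^{(k),\downarrow}_i[m]$ is monotone non-increasing in $i$ and bounded below by $0$, hence converges. Taking successive differences, every coordinate of the sorted row $r^{(k),\downarrow}_i$ converges, so $(r^{(k),\downarrow}_i)_{i\in\nn}$ is Cauchy in the $\ell_\infty$ norm. Applying \cref{lemma:domestic} to the pair $(p,q)=(r^{(k)}_j,r^{(k)}_i)$ with matrix $Q_{j,i}\in\mathcal{D}_{n,\varepsilon}$ then yields
\[|r^{(k)}_j-r^{(k)}_i|_\infty\leq 2n\varepsilon^{-1}|r^{(k),\downarrow}_j-r^{(k),\downarrow}_i|_\infty,\]
whose right-hand side tends to $0$ as $i,j\to\infty$. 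Hence $(r^{(k)}_i)_{i\in\nn}$ is Cauchy and converges for every $k$, so every entry of $P_i$ converges, and the limit lies in $\mathcal{D}_{n,\varepsilon}$ by the compactness established above.

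The main subtlety I anticipate is directional: right multiplication $P_{i+1}=P_iM_{i+1}$ does not obviously mix the columns of $P_i$, so a naive columnwise attack fails. The key pivot is that rows \emph{do} get mixed, via left multiplication by the transpose $M_i\ct$, and the symmetry of the $\varepsilon$-domesticity condition is precisely what keeps us inside $\mathcal{D}_{n,\varepsilon}$ after transposing. Only after this reduction does \cref{lemma:domestic} convert the cheap partial-sum convergence of the sorted rows into genuine $\ell_\infty$ Cauchyness of the unsorted rows, which is the crux of the whole argument.
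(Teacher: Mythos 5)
Your proof is correct, and while the skeleton matches the paper's --- both track the rows of the partial products as trajectories $r\mapsto M\ct r$ in $\triangle_n$, both rely on $\mathcal{D}_{n,\varepsilon}$ being a semigroup closed under transposition, and both finish by using \cref{lemma:domestic} to upgrade $\ell_\infty$-Cauchyness of the \emph{sorted} rows to Cauchyness of the rows themselves --- the middle step is genuinely different. The paper establishes control of the sorted vectors by first extracting a convergent subsequence of the trajectory via compactness of $\triangle_n$, and then invoking a separate sandwiching lemma (\cref{lemma:intermediate}) to bound $|p_i\sort-p_j\sort|_\infty$ for intermediate times $j$ in terms of the endpoints, at the cost of a factor $2n$. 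You instead observe that along the majorization chain $r^{(k)}_0\succ r^{(k)}_1\succ\cdots$ each partial sum $\sum_{m=1}^{\ell} r^{(k),\downarrow}_i[m]$ is monotone non-increasing and bounded below, hence convergent, so the sorted rows converge outright by taking successive differences. This is more elementary and more direct: it bypasses the subsequence extraction, makes \cref{lemma:intermediate} unnecessary (it is not used elsewhere in the paper), and avoids the extra factor of $2n$ in the final Cauchy estimate. The one ingredient you make explicit that the paper leaves implicit --- that $\varepsilon$-domesticity is preserved under transposition because the defining condition is symmetric in $X$ and $Y$ --- is correct and is in fact needed by both arguments, so stating it is a point in your favor.
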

	\begin{proof}
		The compactness is from the fact that $\mathcal{D}_{n,\varepsilon}$ is an intersection of closed sets (thus closed itself), and is a bounded subset of $\mathbb{R}^{n^2}$.
		
		To show that every infinite product $\prod_{i\in\nn}M_i$ in $\mathcal{D}_{n,\varepsilon}$ converges, it suffices to show that for every $p\in\triangle_n$, the trajectory
		\[(p_i\in\triangle_n:p_0=p,p_{i+1}=M_i\ct p_i)_{i\in\nn}\]
		converges. By the compactness of $\triangle_n$, $(p_i)_{i\in\nn}$ has a converging subsequence, which means that for every $\delta>0$ there exists $i\in\nn$ and infinitely many $k>i$ such that $|p_i-p_k|_\infty\leq\delta$. For each such $k$ and every $j$ such that $i<j<k$, by \cref{lemma:intermediate} we have $|p_i\sort-p_j\sort|_\infty\leq 2n|p_i\sort-p_k\sort|_\infty\leq 2n|p_i-p_k|_\infty\leq 2n\delta$. Then \cref{lemma:domestic} further implies that $|p_i-p_j|\leq 4n^2\varepsilon^{-1}\delta$.
		
		In other words, for every $\delta>0$, there exists $i\in\nn$ such that for every $j,k\geq i$, $|p_j-p_k|\leq 8n^2\varepsilon^{-1}\delta$. Therefore $(p_i)_{i\in\nn}$ is a Cauchy sequence, and thus converges.
		
		Finally, the value of every infinite product $\prod_{i\in\nn}M_i$ in $\mathcal{D}_{n,\varepsilon}$ must also be in $\mathcal{D}_{n,\varepsilon}$, since it is the limit of a sequence in $\mathcal{D}_{n,\varepsilon}$, and $\mathcal{D}_{n,\varepsilon}$ is closed.
	\end{proof}

	\subsection{Averagings as the Convergence Core}\label{section:averaging}
	
	\begin{definition}
		A doubly stochastic matrix is $A\in\mathcal{D}_n$ is the \textit{averaging} over a partition $P_1\sqcup\ldots\sqcup P_s=\{1,\ldots,n\}$, if
		\[A[i,j]=\left\{\begin{array}{ll}
			1/|P_t| & \textrm{if }i,j\in P_t\textrm{ for some }t\in\{1,\ldots,s\} \\
			0 & \textrm{otheriwse.}
		\end{array}\right.\]
		The set of all averagings in $\mathcal{D}_n$ is denoted as $\mathcal{A}_n$. 
	\end{definition}
	
	\begin{proposition}
		For every $\varepsilon\in(0,(2n)^{-1}]$, $\mathcal{A}_n\subset\mathcal{D}_{n,\varepsilon}$.
	\end{proposition}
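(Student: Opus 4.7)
The plan is to compute the block sum $\sum_{i\in X,j\in Y}A[i,j]$ explicitly in terms of the intersections of $X$ and $Y$ with the blocks of the partition, and then show that $X\neq Y$ forces enough ``loss'' from the trivial upper bound $|X|$ to cover the factor $(1-\varepsilon)$ when $\varepsilon\leq (2n)^{-1}$.

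Concretely, first I would fix an averaging $A\in\mathcal{A}_n$ over partition $P_1\sqcup\cdots\sqcup P_s=\{1,\ldots,n\}$ and two subsets $X,Y\subseteq\{1,\ldots,n\}$ with $X\neq Y$ and $|X|=|Y|$. Writing $x_t\coloneqq|X\cap P_t|$, $y_t\coloneqq|Y\cap P_t|$, $p_t\coloneqq|P_t|$, the definition of $A$ gives
\[\sum_{i\in X,j\in Y}A[i,j]=\sum_{t=1}^s\frac{x_t y_t}{p_t}.\]
Since $0\leq x_t,y_t\leq p_t$, each summand satisfies $x_t y_t/p_t\leq \min(x_t,y_t)$.

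Next I would exploit the constraint $|X|=|Y|$, i.e.\ $\sum_t x_t=\sum_t y_t$, via the identity $\min(x_t,y_t)=\tfrac12(x_t+y_t-|x_t-y_t|)$, which gives
\[\sum_{t=1}^s\min(x_t,y_t)=|X|-\tfrac12\sum_{t=1}^s|x_t-y_t|.\]
Now the crucial observation is that if $X\neq Y$ with $|X|=|Y|$, then the signed discrepancies $x_t-y_t$ are integers that sum to $0$ and are not all zero, so there must be at least one positive and at least one negative among them, yielding $\sum_t|x_t-y_t|\geq 2$. Combining the bounds,
\[\sum_{i\in X,j\in Y}A[i,j]\leq |X|-1.\]

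Finally I would close the gap to $(1-\varepsilon)|X|$: since $|X|\leq n$ and $\varepsilon\leq (2n)^{-1}$, we have $\varepsilon|X|\leq 1/2<1$, so $|X|-1\leq |X|-\varepsilon|X|=(1-\varepsilon)|X|$, completing the proof. There is no real obstacle in this argument — the only point that requires a moment of attention is the integer parity step showing $\sum_t|x_t-y_t|\geq 2$ (rather than just $>0$), which is what makes the bound quantitative enough to beat $(1-\varepsilon)$ for all admissible $\varepsilon$.
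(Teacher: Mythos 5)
There is a genuine gap: the step asserting that $X\neq Y$ forces the discrepancies $x_t-y_t$ to be not all zero is false. The sets $X$ and $Y$ can meet every block in the same \emph{number} of elements while being different sets. For instance, for the averaging over the single block $P_1=\{1,2\}$ with $n=2$, take $X=\{1\}$ and $Y=\{2\}$: then $x_1=y_1=1$, so $\sum_t|x_t-y_t|=0$, and your chain of inequalities would yield $\sum_{i\in X,j\in Y}A[i,j]\leq|X|-1=0$, whereas the actual value is $A[1,2]=\tfrac12$. (The proposition itself survives here, since $\tfrac12\leq(1-\varepsilon)\cdot 1$, but your argument does not establish it.) Everything before that point is correct: the formula $\sum_{i\in X,j\in Y}A[i,j]=\sum_t x_ty_t/p_t$, the bound by $\min(x_t,y_t)$, and the identity $\sum_t\min(x_t,y_t)=|X|-\tfrac12\sum_t|x_t-y_t|$.

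The repair is to add the missing case. If $x_t=y_t$ for all $t$ but $X\neq Y$, then some block has $X\cap P_t\neq Y\cap P_t$ with $x_t=y_t\geq 1$; two distinct equal-size subsets of $P_t$ cannot contain one another, so $Y\cap P_t\neq P_t$ and hence $y_t\leq p_t-1$. For that block,
\[\frac{x_ty_t}{p_t}=y_t-\frac{y_t(p_t-y_t)}{p_t}\leq y_t-\frac{p_t-1}{p_t}\leq x_t-\frac12,\]
and bounding the remaining blocks by $x_ry_r/p_r\leq x_r$ gives $\sum_{i\in X,j\in Y}A[i,j]\leq|X|-\tfrac12\leq(1-\varepsilon)|X|$, using $\varepsilon|X|\leq n\cdot(2n)^{-1}=\tfrac12$ (note that in general you only get a loss of $\tfrac12$, not $1$, which is why the final comparison must be done against $\varepsilon|X|\leq\tfrac12$ rather than against $1$). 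With this case added your argument is sound and close in spirit to the paper's, which likewise locates a single block witnessing $X\neq Y$ and extracts a loss of at least $\tfrac12$ from it; your discrepancy-summing formulation handles the unequal-intersection-sizes case more symmetrically, but both proofs ultimately rest on the same one-block loss estimate.
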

	\begin{proof}
		For each $A\in\mathcal{A}_n$, suppose $A$ is the averaging over the partition $P_1\sqcup\ldots\sqcup P_s=\{1,\ldots,n\}$. For every $X,Y\subseteq\{1,\ldots,n\}$ such that $X\neq Y$ and $|X|=|Y|$, there must be a $P_t$ such that $X\cap P_t\neq\varnothing$, $P_t\not\subset Y$, and $|X\cap P_t|\geq |Y\cap P_t|$. If $|P_t|=1$ then $\sum_{i\in X,j\in Y}A[i,j]\leq |X|-1$, and if $|P_t|>1$ then
		\[\sum_{i\in X,j\in Y}A[i,j]\leq |X|-|X\cap P_t|\left(1-\frac{|Y\cap P_t|}{|P_t|}\right)\leq |X|-1+\frac{1}{P_t}\leq |X|-\frac{1}{2}.\]
		Whichever happens, we always have $\sum_{i\in X,j\in Y}A[i,j]\leq (1-\varepsilon)|X|$, since $\varepsilon\leq(2|X|)^{-1}$.
	\end{proof}
	
	\begin{lemma}\label{lemma:invariant}
		For every $M\in\mathcal{D}_n$ and $p\in\mathbb{R}^n$, if $Mp=p$ then for every $i,j\in\{1,\ldots,n\}$, either $M[i,j]=0$ or $p[i]=p[j]$.
	\end{lemma}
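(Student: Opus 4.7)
The plan is to exploit the fact that $M$ is doubly stochastic together with the extremal-principle argument that a convex combination equals its maximum argument only if all nonzero-weight terms attain that maximum. Concretely, I would fix the value $v = \max_i p[i]$ and consider the index set $I = \{i : p[i] = v\}$. For any $i \in I$, the equation $p[i] = \sum_j M[i,j] p[j]$ expresses $v$ as a convex combination (the weights $M[i,j]$ sum to $1$) of the quantities $p[j]$, each of which is at most $v$; hence $M[i,j] > 0$ forces $p[j] = v$, i.e.\ $j \in I$. This handles the case $i \in I$ immediately: either $M[i,j]=0$ or $p[i] = p[j] = v$.

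Next I would leverage double-stochasticity to transfer this conclusion from rows in $I$ to columns in $I$. Summing row sums gives $\sum_{i \in I,\,j} M[i,j] = |I|$, and by the previous step this sum is concentrated on $j \in I$, so $\sum_{i \in I,\,j \in I} M[i,j] = |I|$. On the other hand, each column in $I$ has total mass $1$, so $\sum_{i,\,j \in I} M[i,j] = |I|$ as well. Subtracting forces $\sum_{i \notin I,\,j \in I} M[i,j] = 0$, which means that for $j \in I$ and $i \notin I$ we have $M[i,j] = 0$. Combined with the previous step, this shows $M$ is block-diagonal with respect to the partition $\{I, I^c\}$: the block $I \times I$ carries no mass outside itself in either direction.

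Finally, I would finish by induction on $n$. The restriction $M|_{I^c \times I^c}$ is itself doubly stochastic (each of its rows and columns already has full mass within $I^c$ by the block-diagonal property), and the restriction of $p$ to $I^c$ is a fixed vector of this smaller matrix. The induction hypothesis then yields the conclusion for indices $i,j \in I^c$, while the mixed cases $i \in I, j \notin I$ and $i \notin I, j \in I$ are precisely the ones already handled above (with $M[i,j] = 0$), and the case $i, j \in I$ was handled in the first paragraph.

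I do not expect any serious obstacle here; the only mildly subtle point is the double-counting that propagates the support restriction from rows in $I$ to columns in $I$, which is where double-stochasticity (rather than just stochasticity) is essential. Equivalently, one can phrase the whole argument without explicit induction by iterating the same maximum-value analysis on the successive distinct values $v_1 > v_2 > \cdots$ of $p$, obtaining the same block-diagonal decomposition of $M$ along the level sets of $p$.
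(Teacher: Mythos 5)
Your proposal is correct and follows essentially the same route as the paper: isolate the level set of the maximum value of $p$, use the convex-combination (extremal) argument to kill the entries from that set to its complement, use double stochasticity to kill the entries in the other direction, and induct on the complement. The only cosmetic difference is that you spell out more explicitly why the restricted matrix remains doubly stochastic, which the paper leaves implicit.
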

	\begin{proof}
		The proof is by induction on $n$. The case when $n=1$ is trivial. For $n>1$, let
		\[X=\{i\in\{1,\ldots,n\}:p[i]=p\sort[1]\}.\]
		For each $i\in X$, since $p[i]=\sum_{j=1}^n M[i,j]p[j]$ is a convex combination of $p[1],\ldots,p[n]$, it could only have weight on $p[j]$ for $j\in X$. Thus whenever $i\in X$ and $j\notin X$ we have $M[i,j]=0$. That implies $\sum_{i,j\in X}M[i,j]=|X|$, therefore when $i\notin X$ and $j\in X$ we also have $M[i,j]=0$. And naturally when $i,j\in X$ we have $p[i]=p[j]$.
		
		If $X\neq\{1,\ldots,n\}$, then by removing from $M$ and $p$ the dimensions corresponding to $X$, we get matrix $M'$ and vector $p'$. From the above we know $M'$ is still stochastic, and $M'p'=p'$. By induction hypothesis we have either $M[i,j]=0$ or $p[i]=p[j]$ for all $i,j\notin X$, which completes the induction step.
	\end{proof}
	
	\begin{theorem}\label{theorem:averaging}
		For every $\varepsilon\in(0,(2n)^{-1}]$, and for every finite subset $\mathcal{M}\subset\mathcal{D}_{n,\varepsilon}$, $\mathcal{A}_n$ is a convergence core of $\mathcal{M}$.
	\end{theorem}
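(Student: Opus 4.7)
The plan is to prove the theorem in two steps: first, an algebraic lemma that every idempotent doubly stochastic matrix is an averaging, and second, an application of \cref{lemma:absorb} showing that some tail of any infinite product in $\mathcal{M} \cup \mathcal{A}_n$ is such an idempotent.

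For the algebraic lemma, let $T \in \mathcal{D}_n$ satisfy $T^2 = T$. I would define the undirected graph $G$ on $\{1,\ldots,n\}$ with an edge $\{i,j\}$ whenever $T[i,j] > 0$ or $T[j,i] > 0$, and let $P_1,\ldots,P_s$ be its connected components. By construction, $T[i,j] = 0$ whenever $i$ and $j$ lie in different components, so $T$ is block diagonal with respect to this partition, and each block $T|_{P_t}$ is a doubly stochastic idempotent matrix on $P_t$ whose support graph $G|_{P_t}$ is connected. Idempotence forces the image of $T|_{P_t}$ to coincide with its $1$-eigenspace, and by \cref{lemma:invariant} every fixed vector of $T|_{P_t}$ is constant across every edge of $G|_{P_t}$, hence constant on all of $P_t$. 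Therefore $T|_{P_t}$ is a rank-one projection whose image is spanned by the constant vector on $P_t$, and a direct computation using double stochasticity gives $T|_{P_t}[i,j] = 1/|P_t|$ for all $i,j\in P_t$, making $T$ the averaging over $P_1,\ldots,P_s$.

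For the main argument, $\mathcal{A}_n$ is finite since there are only finitely many partitions of $\{1,\ldots,n\}$, so $\mathcal{M} \cup \mathcal{A}_n$ is finite. Hence in any infinite sequence $(S_i)_{i\in\nn}$ from $\mathcal{M}\cup\mathcal{A}_n$ there exists $k$ such that every $S_i$ with $i>k$ appears infinitely often in the sequence. The tail $P \coloneqq \prod_{i>k} S_i$ converges by \cref{theorem:domestic}, since $\mathcal{A}_n\subset\mathcal{D}_{n,\varepsilon}$. Applying \cref{lemma:absorb} gives $P\cdot S_i = P$ for every $i>k$, and iterating yields $P \cdot S_{k+1}\cdots S_m = P$ for every $m>k$; sending $m\to\infty$ and using continuity of the semigroup operation produces $P\cdot P = P$. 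The algebraic lemma then gives $P\in\mathcal{A}_n$, so $\mathcal{A}_n$ is a convergence core of $\mathcal{M}$.

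The bulk of the work lies in the algebraic lemma on idempotent doubly stochastic matrices. Obtaining the block decomposition from the support graph $G$ is immediate from its definition, but pinning down the uniform behavior inside each block essentially requires \cref{lemma:invariant} together with the identification of the image and $1$-eigenspace under idempotence. Once that lemma is in hand, the second step is a routine consequence of \cref{lemma:absorb} and the convergence guaranteed by \cref{theorem:domestic}.
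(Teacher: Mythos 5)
Your proof is correct, and it takes a genuinely different route from the paper's in the second half. The paper also begins by passing to the set $\mathcal{M}'$ of factors occurring infinitely often and applying \cref{lemma:absorb} to obtain $AM=A$ for the tail limit $A$ and each $M\in\mathcal{M}'$; but instead of deducing idempotence of $A$, it feeds these identities directly into \cref{lemma:invariant} to show that rows $i$ and $j$ of $A$ agree whenever some $M\in\mathcal{M}'$ has $M[i,j]>0$, propagates this along the equivalence relation generated by the supports of the matrices in $\mathcal{M}'$, observes that $A$ is block diagonal with respect to that relation, and concludes that $A$ is the averaging over the resulting partition. Your argument instead extracts the single identity $P^2=P$ (by iterating $PS_i=P$ and passing to the limit) and invokes the classification of idempotent doubly stochastic matrices as averagings, which you prove correctly via the connected components of the support graph, the image-equals-fixed-space property of idempotents, and \cref{lemma:invariant}. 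Both proofs rest on the same two lemmas; yours is more modular and isolates a reusable structural fact, while the paper's identifies the limit more explicitly as the averaging over the partition generated by the supports of the recurring factors. A minor point in your favor: you work with infinite products in $\mathcal{M}\cup\mathcal{A}_n$ throughout, which is what the definition of convergence core literally requires, whereas the paper's proof is phrased only for products in $\mathcal{M}$ (harmless, since $\mathcal{M}\cup\mathcal{A}_n$ is again a finite subset of $\mathcal{D}_{n,\varepsilon}$, but your phrasing is cleaner).
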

	\begin{proof}
		Given an infinite product $\prod_{i\in\nn}M_i$ in $\mathcal{M}$, let $\mathcal{M}'\subseteq\mathcal{M}$ consists of all matrices $M\in\mathcal{M}$ that appears infinitely many times in $(M_i)_{i\in\nn}$. As $\mathcal{M}$ is finite, $\mathcal{M}'$ must be non-empty, and there exists $k\in\nn$ that $\prod_{i>k}M_i$ is an infinite product in $\mathcal{M}'$. Let $A=\prod_{i>k}M_i$, then by \cref{lemma:absorb} we have $AM=A$ for all $M\in\mathcal{M}'$.
		
		Define a binary relation $\sim$ on $\{1,\ldots,n\}$ as follows: $i\sim j$ if and only if there exists $M\in\mathcal{M}'$ such that $M[i,j]>0$. For each pair of $i,j$ such that $i\sim j$, suppose $M\in\mathcal{M}'$ has $M[i,j]>0$, then since $M\ct A\ct p=A\ct p$ for all $p\in\triangle_n$, we have $(A\ct p)[i]=(A\ct p)[j]$ by \cref{lemma:invariant}. In particular, if $i\sim j$ then $A[i,k]=A[j,k]$ for all $k\in\{1,\ldots,n\}$.
		
		Now let $\equiv$ to be the equivalence relation on $\{1,\ldots,n\}$ generated by $\sim$, then whenever $i\equiv j$ we have $A[i,k]=A[j,k]$ for all $k\in\{1,\ldots,n\}$. On the other hand, every $M\in\mathcal{M}'$ is a block diagonal matrix that satisfies $M[i,j]=0$ whenever $i\not\equiv j$, so is their products, and furthermore the limit of infinite products. Therefore whenever $i\not\equiv j$ we have $A[i,j]=0$. There is only one unique matrix $A\in\mathcal{D}_n$ satisfying the above constraints, which is the averging over the partition induced by $\equiv$.
	\end{proof}

	\begin{corollary}\label{corollary:domestic}
		For every $\varepsilon\in(0,(2n)^{-1}]$, and for every finite subset $\mathcal{M}\subset\mathcal{D}_{n,\varepsilon}$, $\clos\gen{\mathcal{M}}$ is finitely generated and profinite.
	\end{corollary}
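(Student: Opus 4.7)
The plan is to observe that this corollary is essentially a direct application of the earlier \cref{corollary:profinite} to the semigroup $\mathcal{D}_{n,\varepsilon}$, with $\mathcal{A}_n$ playing the role of the finite convergence core. All the substantive work has already been done in the preceding two subsections, so my job is to verify the hypotheses line up.

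First I would check that $\mathcal{D}_{n,\varepsilon}$ satisfies the ambient assumptions of \cref{corollary:profinite}. It is first-countable since it is a subspace of the metrizable space $\mathbb{R}^{n^2}$; it is a compact and convergent topological semigroup by \cref{theorem:domestic}; and it contains an identity, namely the $n\times n$ identity matrix $I$. To see that $I$ is $\varepsilon$-domestic, note that for any $X\neq Y$ with $|X|=|Y|$ one has $\sum_{i\in X,j\in Y} I[i,j]=|X\cap Y|\leq |X|-1\leq (1-\varepsilon)|X|$, using $\varepsilon\leq (2n)^{-1}\leq 1/|X|$.

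Next I would observe that $\mathcal{A}_n$ is finite: averagings in $\mathcal{D}_n$ are in bijection with partitions of $\{1,\ldots,n\}$, of which there are only $B_n$. By \cref{theorem:averaging}, $\mathcal{A}_n$ is a convergence core of the finite set $\mathcal{M}\subset\mathcal{D}_{n,\varepsilon}$. Thus $\mathcal{M}$ admits a finite convergence core inside a first-countable, compact, convergent semigroup with identity, and \cref{corollary:profinite} applies verbatim to yield that $\clos\gen{\mathcal{M}}$ is finitely generated and profinite.

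There is essentially no obstacle: once \cref{theorem:domestic} and \cref{theorem:averaging} are in hand, the result is immediate. The only small detail is the identity check above, and a note that one could alternatively restrict the core to $\mathcal{A}_n\cap \clos\gen{\mathcal{M}}$ (following the refinement in the proof of \cref{corollary:profinite}) to ensure the generators of $\clos\gen{\mathcal{M}}$ actually lie in $\clos\gen{\mathcal{M}}$, but this is already handled inside that corollary.
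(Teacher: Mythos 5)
Your proof is correct and matches the paper's, which likewise derives the corollary directly from \cref{theorem:domestic}, \cref{theorem:averaging} and \cref{corollary:profinite}. The extra hypothesis checks you supply (first-countability, the identity matrix being $\varepsilon$-domestic, finiteness of $\mathcal{A}_n$) are all valid and simply make explicit what the paper leaves implicit.
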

	\begin{proof}
		This is a direct corollary of \cref{theorem:domestic}, \cref{theorem:averaging} and \cref{corollary:profinite}.
	\end{proof}
	
	\subsection{From $\varepsilon$-Domestic to Doubly Stochastic}\label{section:final}
	
	\begin{lemma}\label{lemma:permute}
		For every $M\in\mathcal{D}_n$, there exist $P\in S_n$ and $M'\in\mathcal{D}_{n,\varepsilon}$ for some $\varepsilon\in(0,(2n)^{-1}]$, such that $M=PM'$.
	\end{lemma}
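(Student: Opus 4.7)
The plan is to extract a permutation matrix from the support of $M$ via Birkhoff's theorem, and then show that the residual matrix has a strictly positive diagonal, which is already enough to force $\varepsilon$-domesticity for small $\varepsilon>0$. Concretely, by the Birkhoff--von Neumann theorem I may write $M=\sum_\sigma c_\sigma P_\sigma$ as a convex combination of permutation matrices with positive weights, and pick any $\pi\in S_n$ occurring with coefficient $c_\pi>0$. Setting $P$ to be the permutation matrix of $\pi$ (using the convention $P[i,j]=1$ iff $i=\pi(j)$ from the preliminaries) gives $M[\pi(i),i]\geq c_\pi>0$ for every $i$.

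Define $M'=P^{-1}M$. Since permutation matrices and doubly stochastic matrices are closed under multiplication, $M'\in\mathcal{D}_n$, and a direct computation gives $M'[i,i]=M[\pi(i),i]>0$. I would then set $\varepsilon_0=\min_i M'[i,i]>0$ and choose $\varepsilon=\min(\varepsilon_0/n,\,(2n)^{-1})$, which lies in $(0,(2n)^{-1}]$ as required by the definition of $\mathcal{D}_{n,\varepsilon}$.

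What remains is to verify $M'\in\mathcal{D}_{n,\varepsilon}$ directly from the definition. Given $X,Y\subseteq\{1,\ldots,n\}$ with $X\neq Y$ and $|X|=|Y|$, the set $X\setminus Y$ is nonempty. For each $i\in X\setminus Y$ the row sum constraint gives $\sum_{j\in Y}M'[i,j]\leq 1-M'[i,i]$, because $i\notin Y$ but $M'[i,i]$ contributes to the full row sum of $1$. Combining with the trivial bound $\sum_{j\in Y}M'[i,j]\leq 1$ for $i\in X\cap Y$ yields
\[
\sum_{i\in X,\,j\in Y}M'[i,j]\ \leq\ |X|-\sum_{i\in X\setminus Y}M'[i,i]\ \leq\ |X|-\varepsilon_0\ \leq\ (1-\varepsilon)|X|,
\]
where the last step uses $\varepsilon|X|\leq\varepsilon n\leq\varepsilon_0$. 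Hence $M=PM'$ with $M'\in\mathcal{D}_{n,\varepsilon}$, as required.

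I do not anticipate any real obstacle, since the argument reduces to a one-line calculation once $\pi$ is extracted. The only mild subtlety is the $1/n$ factor in the definition of $\varepsilon$: the naive choice $\varepsilon=\varepsilon_0$ would fail because the $\varepsilon$-domestic inequality scales with $|X|$, which can be as large as $n-1$, so a factor of $n$ in the denominator is needed to absorb the worst case. The upper bound $(2n)^{-1}$ then enters only to keep $\varepsilon$ in the range mandated by the definition of $\mathcal{D}_{n,\varepsilon}$.
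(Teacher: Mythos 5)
Your proof is correct, but it takes a genuinely different route from the paper. The paper's proof never invokes Birkhoff's theorem: it instead collects the family $C$ of ``saturated'' pairs $(X,Y)$ with $X\neq Y$, $|X|=|Y|$ and $\sum_{i\in X,j\in Y}M[i,j]=|X|$, shows that $C$ is closed under coordinate-wise intersections and differences so that it is generated by a disjoint subfamily $C_0$, chooses a permutation $P$ aligning every pair of $C_0$ (hence of $C$), and reads off $\varepsilon$ from the gap $1-\frac{1}{|X|}\sum_{i\in X,j\in Y}M[i,j]$ over the finitely many unsaturated pairs. You instead extract a single permutation $\pi$ of positive Birkhoff weight, observe that $M'=P^{-1}M$ then has diagonal bounded below by $c_\pi$, and show that a strictly positive diagonal alone forces $\varepsilon$-domesticity with $\varepsilon\approx\varepsilon_0/n$, since any $X\neq Y$ with $|X|=|Y|$ has some $i\in X\setminus Y$ whose row loses at least $M'[i,i]$ of its mass outside $Y$. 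Your computation is correct, including the necessity of the $1/n$ factor. What each approach buys: yours is shorter, uses only Birkhoff (which the paper already cites for other purposes), and bounds \emph{every} pair $(X,Y)$ uniformly by one inequality; the paper's yields in principle a sharper $\varepsilon$ tied to the actual unsaturated gaps of $M$, at the cost of the combinatorial analysis of $C$ and of having to choose $P$ carefully --- indeed your argument sidesteps a delicate point in the paper's verification, namely the pairs with $PX=Y$ but $X\neq Y$, which are not controlled by the choice of $\varepsilon$ over pairs outside $C$ and require the aligning permutation to be chosen as the identity off the support of $C_0$. Since the lemma only needs \emph{some} positive $\varepsilon$, your coarser bound is entirely sufficient for the application.
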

	\begin{proof}
		Let
		\[C=\left\{(X,Y):X,Y\subseteq\{1,\ldots,n\}, |X|=|Y|, \sum_{i\in X,j\in Y}M[i,j]=|X|\right\}.\]
		It suffices to prove that there exists $P\in S_n$ such that $X=PY$ for all $(X,Y)\in C$, since then we can let
		\[\varepsilon=\min\left( (2n)^{-1},\quad 
		 	1-\max_{\substack{X,Y\subseteq\{1,\ldots,n\},X\neq Y,\\ |X|=|Y|, (X,Y)\notin C}}
			\frac{1}{|X|}\sum_{i\in X,j\in Y}M[i,j]\right)>0\]
		and it is straightforward to check that $P^{-1}M\in\mathcal{D}_{n,\varepsilon}$.
		
		Since $M$ is doubly stochastic, $(X,Y)\in C$ implies that $M[i,j]=0$ whenever $i\in X,j\notin Y$ or $i\notin X,j\in Y$. Therefore if $(X_1,Y_1),(X_2,Y_2)\in C$, we can further deduce that
		\[(X_1\cap X_2, Y_1\cap Y_2),\ (X_1\setminus X_2, Y_1\setminus Y_2),\ (X_2\setminus X_1, Y_2\setminus Y_1)\]
		are all in $C$. Hence $C$ can be generated by a subset $C_0\subseteq C$ through coordinate-wise union, where for every pair of distinct $(X_1,Y_1),(X_2,Y_2)\in C_0$ we have $X_1\cap X_2=Y_1\cap Y_2=\varnothing$. There must exist $P\in S_n$ such that $X=PY$ for all $(X,Y)\in C_0$. Since whenever $X_1=PY_1$ and $X_2=PY_2$ we have $X_1\cup X_2=P(Y_1\cup Y_2)$, such $P$ must also satisfy $X=PY$ for all $(X,Y)\in C$.
	\end{proof}
	
	\begin{theorem}
		For every fixed $p,q\in\triangle_n$, and every finite subset $\mathcal{M}\subset\mathcal{D}_n$, the set $\{q\ct Mp:M\in\gen{\mathcal{M}}\}$ is nowhere dense.
	\end{theorem}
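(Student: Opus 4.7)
The plan is to reduce to \cref{corollary:domestic} by peeling off the permutation part from each generator. For each $M \in \mathcal{M}$, I would apply \cref{lemma:permute} to write $M = P_M M'$ with $P_M \in S_n$ and $M' \in \mathcal{D}_{n,\varepsilon_M}$, and then take $\varepsilon = \min_{M \in \mathcal{M}} \varepsilon_M$ so that every such $M'$ lies in $\mathcal{D}_{n,\varepsilon}$. I would then introduce the finite set of permutation conjugates
\[
\mathcal{N} = \{Q^{-1} M' Q : M \in \mathcal{M},\ Q \in S_n\},
\]
which is still contained in $\mathcal{D}_{n,\varepsilon}$: the $\varepsilon$-domestic condition depends only on sums of the form $\sum_{i \in X, j \in Y} M'[i,j]$ and is manifestly invariant under simultaneous relabeling of rows and columns.

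Next, a straightforward induction on word length would show that every element of $\gen{\mathcal{M}}$ admits a decomposition $PN$ with $P \in S_n$ and $N \in \gen{\mathcal{N}}$. The inductive step uses the identity $N' P_M = P_M (P_M^{-1} N' P_M)$ applied factor-by-factor to shuttle each newly introduced permutation leftward through the current word, with every intermediate conjugate again lying in $\mathcal{N}$ (conjugation by $P_M$ just replaces $Q$ by $Q P_M$ in the defining description of $\mathcal{N}$). Consequently
\[
\{q\ct M p : M \in \gen{\mathcal{M}}\} \subseteq \bigcup_{P \in S_n} \bigl\{(P\ct q)\ct N p : N \in \clos\gen{\mathcal{N}}\bigr\},
\]
which is a finite union.

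For the final step, \cref{corollary:domestic} applied to $\mathcal{N} \subset \mathcal{D}_{n,\varepsilon}$ tells us that $\clos\gen{\mathcal{N}}$ is finitely generated and hence countable. Each functional $N \mapsto (P\ct q)\ct N p$ is continuous, so its image of the compact set $\clos\gen{\mathcal{N}}$ is a countable closed subset of $\mathbb{R}$, which must have empty interior (any interval is uncountable) and is therefore nowhere dense. A finite union of nowhere dense sets remains nowhere dense, completing the proof. The step I expect to require the most care is the permutation-shuttling: one must verify both that the conjugates appearing during the induction never leave the fixed finite set $\mathcal{N}$ and that $\mathcal{N}$ is genuinely contained in a single $\mathcal{D}_{n,\varepsilon}$, so that \cref{corollary:domestic} can be invoked for $\mathcal{N}$ as a whole.
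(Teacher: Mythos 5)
Your proof is correct, and the core machinery is the same as the paper's: decompose each generator as a permutation times an $\varepsilon$-domestic matrix via \cref{lemma:permute}, take the minimum $\varepsilon$, close the set of domestic parts under conjugation by $S_n$, shuttle all permutations to the left by the identity $NP = P(P^{-1}NP)$, and invoke \cref{corollary:domestic} to get countability of the closure of the domestic subsemigroup. Your handling of the two points you flag as delicate (that the conjugates stay in the fixed finite set $\mathcal{N}$, and that $\mathcal{N}$ sits inside a single $\mathcal{D}_{n,\varepsilon}$ because the domestic condition is conjugation-invariant) matches the paper exactly. Where you genuinely diverge is the endgame. The paper first normalizes: it adjoins $S_n$ and matrices $A,B$ with $p=Ae$, $q=Be$ to $\mathcal{M}$ so that the target set embeds into $\{M[1,1]:M\in\gen{\mathcal{M}}\}$, and then uses the sandwich $M_a=A'MA'$ (with $A'$ the averaging over $\{1\}\sqcup\{2,\ldots,n\}$) to identify the closure of the entry set with the entry set of $\clos\gen{\mathcal{M}}$, finally concluding from countability of $\clos\gen{\mathcal{M}}$. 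You instead observe directly that $\{q\ct Mp\}$ is contained in the finite union over $P\in S_n$ of the sets $\{(P\ct q)\ct Np : N\in\clos\gen{\mathcal{N}}\}$, each of which is the continuous image of a compact countable set, hence closed, countable, and of empty interior. Your route is shorter and avoids both the "without loss of generality enlarge $\mathcal{M}$" step and the averaging-sandwich identity; what the paper's version buys in exchange is the slightly stronger structural statement that the closure of the entry set is itself realized by entries of $\clos\gen{\mathcal{M}}$, which is not needed for nowhere-density but makes the mechanism more explicit.
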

	\begin{proof}
		Without loss of generality assume $S_n\subseteq\mathcal{M}$. Let $e=(1,0,\ldots,0)\in\triangle_n$, and let $A,B\in\mathcal{D}_n$ be the ones such that $p=Ae$ and $q=Be$. We can also assume $A,B\in\mathcal{M}$, so that the target set becomes
		\[\{e\ct B\ct MAe:M\in\gen{\mathcal{M}}\}\subseteq\{M[1,1]:M\in\gen{\mathcal{M}}\}.\]
		For every $a\in[0,1]$, define matrix $M_a\in \mathcal{D}_n$ as
		\[M_a[i,j]=\left\{\begin{array}{ll}
			a & \textrm{if }i=j=1 \\
			\frac{1}{n-1}(1-a) & \textrm{if }i=1,j>1\textrm{ or }i>1,j=1 \\
			\frac{1}{n-1}\left(1-\frac{1}{n-1}(1-a)\right) & \textrm{if }i,j>1.
		\end{array}\right.\]
		If $a=M[1,1]$ for some $M\in\mathcal{D}_n$, it is easy to check that $M_a=A'MA'$, where $A'$ is the averaging over $\{1\}\sqcup\{2,\ldots,n\}$. If we further assume $A'\in\mathcal{M}$, we have
		\[\clos \{M[1,1]:M\in\gen{\mathcal{M}}\}=\{M[1,1]:M\in\clos \gen{\mathcal{M}}\}.\]
		Therefore in order to prove $\{q\ct Mp:M\in\gen{\mathcal{M}}\}$ is nowhere dense, it suffices to prove that $\clos \gen{\mathcal{M}}$ is countable.
		
		By \cref{lemma:permute}, every $M\in\mathcal{M}$ can be decomposed into $PM'$ for some $P\in S_n$, $\varepsilon\in(0,(2n)^{-1}]$ and $M'\in\mathcal{D}_{n,\varepsilon}$. Let $\mathcal{M}'$ be the set of such matrices $M'$, and let $\varepsilon_0$ be the smallest among such $\varepsilon$, so that we have $\mathcal{M}'\in\mathcal{D}_{n,\varepsilon_0}$.
		
		Now given $M_1\cdots M_k\in\gen{\mathcal{M}}$ where $M_1,\ldots,M_k\in\mathcal{M}$, suppose $M_i=P_i M_i'$ for $P_i\in S_n$ and $M_i'\in\mathcal{M}'$. Then we have
		\begin{align}\label{eq:decom}
			&M_1\cdots M_k \nonumber\\
			=&P_1M_1'\cdots P_kM_k' \nonumber\\
			=&(P_1P_2\cdots P_k)(P_k^{-1}\cdots P_2^{-1}M_1'P_2\cdots P_k)\cdots(P_k^{-1} P_{k-1}^{-1} M_{k-2}' P_{k-1} P_k)(P_k^{-1} M_{k-1}' P_k)M_k'.
		\end{align}
		Notice that whenever $P\in S_n$ and $M'\in\mathcal{D}_{n,\varepsilon}$, we have $P^{-1}M'P\in\mathcal{D}_{n,\varepsilon}$. Therefore by letting 
		\[\mathcal{M}''=\{P^{-1}M'P:P\in S_n,M'\in\mathcal{M}'\}\]
		which is a finite subset of $\mathcal{D}_{n,\varepsilon_0}$, \cref{eq:decom} showed that for every $M\in\gen{\mathcal{M}}$ there exists $P\in S_n$ and $M''\in\gen{\mathcal{M}''}$ such that $M=PM''$.
		
		Finally, given $\limfty{i}M_i\in\clos\gen{\mathcal{M}}$ where $M_i\in\gen{\mathcal{M}}$, suppose $M_i=P_i M_i''$ for $P_i\in S_n$ and $M_i''\in\gen{\mathcal{M}''}$. Let $P\in S_n$ be the one that appears infinitely many times in $(P_i)_{i\in\nn}$, and let $(M_{i_j}'')_{j\in\nn}$ be a converging subseqeunce of $(M_i'':P_i=P)_{i\in\nn}$. Then
		\[\limfty{i}M_i=\limfty{j}PM_{i_j}''=P\cdot\limfty{j}M_{i_j}''.\]
		Since $\limfty{j}M_{i_j}''\in\clos \gen{\mathcal{M}''}$, which by \cref{corollary:domestic} is finitely generated and thus countable, we conclude that $\clos \gen{\mathcal{M}}$ is also countable, which completes the proof.
	\end{proof}

	\paragraph{Acknowledgement} We thank Zichang Wang for proposing an inspiring question that leads to the formation of this paper. We also thank Ran Raz and Zhiyuan Li for helpful comments and discussions.
	
	\bibliography{profinite}
	\bibliographystyle{alpha}
	
\end{document}